\documentclass[a4paper, oneside, reqno]{amsart}
\usepackage{graphicx} % Required for inserting images
\usepackage{amsmath,amsfonts,amssymb}
\usepackage{amsthm, thmtools}
\usepackage{mathtools}
\usepackage{algorithm}
\usepackage{algpseudocode}
\usepackage{hyperref}
\usepackage{enumitem}
\usepackage{aligned-overset}
\usepackage[svgnames]{xcolor}
\usepackage{todonotes}
\usepackage{tikz}
\usetikzlibrary{calc}

\newcommand{\abs}[1]{\left\lvert#1\right\rvert}
\newcommand{\norm}[1]{\left\|#1\right\|}

\newcommand{\N}{\mathbb{N}}

\newcommand{\R}{\mathbb{R}}

\declaretheorem[name=Theorem]{theorem}
\declaretheorem[name=Lemma]{lemma}
\declaretheorem[name=Corollary]{corr}
\theoremstyle{remark}
\declaretheorem[name=Remark]{remark}

\declaretheoremstyle[
    headfont=\bfseries,
    headpunct={.},
    postheadspace={0.5em},
    notefont=\bfseries, % If you add a note, it stays bold
    notebraces={}{},     % Removes parentheses from notes
]{alphastyle}

\newtheorem{assumption}{Assumption}
\renewcommand{\theassumption}{\Alph{assumption}}
\title[A priori adaptive numerical methods for blow-up times of ODEs]{A priori adaptive numerical methods for estimating blow-up times of autonomous ODEs}

\author{H{\aa}kon Hoel}
\address{Department of Mathematics, University of Oslo, Norway}
\email{haakonah@math.uio.no}
\author{Johannes Vincent Meo$^\star$}
\address{Department of Mathematics, University of Oslo, Norway}
\email{johannvm@math.uio.no}

\thanks{$^\star$Corresponding author: Johannes Vincent Meo (johannvm@math.uio.no)}
\keywords{ODEs; blow-up time; adaptivity; numerical methods; finite difference method}
\subjclass[2020]{65G99, 65L05, 65L12, 65L50}

\begin{document}
\begin{abstract}
We present effective a priori adaptive numerical methods for estimating the blow-up time for solutions of autonomous ODEs.
The novelty of our approach is to base our adaptive steps on the sensitivity of an auxiliary hitting time.
We provide results on the theoretical error rates, and show that there is a benefit in terms of computational effort in choosing our adaptive algorithm over alternative approaches. 
Numerical experiments support our theoretical results and show how the methods perform in practice. 
\end{abstract}

\maketitle

\section{Introduction}
For $n$-dimensional autonomous ODE \(x'(t) = b(x(t))\) where solutions blow up in finite time,
this work is concerned with developing effective numerical methods for approximation of the \emph{blow-up time}
\begin{equation*}
    \tau(x_0) \coloneqq \inf\Big\{t \geq 0 \mid \lim_{s\uparrow t}\abs{x(s)} = \infty, \; x(0) = x_0\Big\}.
\end{equation*}
Blow-up times are encountered in a wide variety of physical, engineering and biology models, and indicate phenomenons such as thermal runaway in combustion~\cite{Bebernes1989}, collapse in nonlinear optics~\cite{McLaughlin1986} and extreme concentration of cells in chemotaxis~\cite{Winkler2013}.  Moreover, in the one-dimensional setting, the blow-up time can be represented as the improper integral 
\[\
\tau = \int_{x_0}^\infty\frac{1}{b(x)}\,dx,
\]
which for instance can represent the tail probability of a random variable with probability density function $1/b$. 

Our approach has three main pillars that can be summarized as follows: 1. For a given tolerance $\epsilon>0$,
introduce the auxiliary hitting time 
\[
\tau_\epsilon = \inf\Big\{t \geq 0 \mid \lim_{s\uparrow t}\abs{x(s)} = r(\epsilon), \; x(0) = x_0\Big\},
\]
where $r(\epsilon)>0$ is a threshold parameter chosen sufficiently large so that $|\tau- \tau_\epsilon| = \mathcal{O}(\epsilon)$. 
2. Solve the ODE numerically and compute the hitting time $\bar \tau$ when the norm of your numerical solution attains the value $r(\epsilon)$. 
3. Apply adaptive time stepping with sufficiently small steps to ensure that $|\bar \tau - \tau_\epsilon| = \mathcal{O}(\epsilon)$, but still large enough to preserve efficiency. 

To simplify theoretical studies, we apply the first-order forward Euler scheme as numerical integrator (see Remark~\ref{rem:ho_sceme} for extensions to higher-order integrators). 
As an extension of classic works on adaptive methods on bounded domains~\cite{Bangerth2003,Chaudhry2021,Eriksson1995,Moon2003}, our adaptive time stepping is based on equilibrating error indicators that are the local error of the integrator multiplied with the sensitivity of the threshold hitting time $\tau_\epsilon$ w.r.t. perturbations in the initial condition.
To our knowledge, this is the first sensitivity based numerical method with the focus on \(\tau_\epsilon\) rather than the solution \(x\).
Under relatively mild conditions on \(b\), Theorems~\ref{thm:error_alg1} and~\ref{thm:propertiesAlgRn} present rigorous asymptotic bounds on the approximation error and computational cost, showing that \(|\tau - \bar \tau_\epsilon| = \mathcal{O}(\epsilon)\)  is achieved at the computational cost \(\mathcal{O}(\epsilon^{-1})\) (where the cost measure is proportional to the algorithm's FLOPS).
Rigorous asymptotic bounds of this kind are rare in the literature on numerical methods for estimating blow-up times, and we show that our method
yields better efficiency than first-order numerical methods using uniform time steps and adaptive methods that do not account for the sensitivity, cf.~Sections~\ref{sec:adaptiveVsUniform1D} and~\ref{sec:numExp}.

The literature contains several numerical strategies for estimating blow-up times, with a comprehensive survey in \cite{Bandle1998}. 
Stuart and Floater~\cite{Stuart90} propose a transformation of the underlying time-variable, dependent on the size of the current solution, in order to get the correct asymptotic behaviour when using a  first-order implicit Euler scheme. Hirota and Ozawa~\cite{Hirota2006} utilizes an arc-length transformation to extend the computationally very useful bijection between time and solution-state for one-dimensional problems to higher dimensions and applies a higher-order Runge Kutta methods combined with local-error based adaptivity to achieve an experimentally very efficient method, with some need for parameter tweaking. A posteriori adaptive finite-element methods for inhomogeneous ODEs on abstract Hilbert spaces are considered in~\cite{Holm2018,Cangiani2016}. Our adaptive approach complements these contributions by using a sensitivity-based a priori numerical method for which rigorous convergence rates hold for both the approximation error and the computational cost. 

For semi-linear parabolic PDEs, the seminal work by~\cite{Nakagawa1975} shows that an appropriately 
scaled finite difference method converges to the true blow-up time asymptotically. 
Berger and Kohn~\cite{Berger1988} introduced an algorithm that rescales the numerical solution when it reaches a predefined threshold value \(M\). Rescaling produces a new problem that is solved at finer resolution near the blow-up time and the blow-up point in space. Iterating this procedure leads to an adaptive and localized numerical method. 
Convergence analysis of the rescaling method was studied by Cho and Sun~\cite{Cho2025} and the rescaling method was extended to ODEs by Cho and Wu in ~\cite{Cho2024}, where a first order convergence rate is proven for one-dimensional ODEs with $b(x) = x^p$ for $p>1$.  

Lastly, the works~\cite{Matsue2020,Takayasu2017} study the related problem of identifying with numerical methods when true solutions of dynamical systems blow up. Although not relevant for this work, as we impose sufficient growth assumptions on the vector field to ensure blow ups, this is a highly relevant challenge for applications. 

The rest of this paper is organized as follows. Section~\ref{sec:adaptive1D} introduces the 
mathematical problem of estimating blow-up times in 1D, describes our adaptive method in Algorithm~\ref{alg:1d} and theoretical results on convergence and computational cost 
in Theorem~\ref{thm:error_alg1}. Section~\ref{sec:higherDim} extends our adaptive method to 
higher-dimensional ODEs. The resulting method is described in Algorithm~\ref{alg:Rn}
and Theorem~\ref{thm:propertiesAlgRn}. 
Finally, Section~\ref{sec:numExp} presents several numerical examples backing up our theoretical results. 

\section{An adaptive numerical method for estimation of blow-up times for 1D ODEs}
\label{sec:adaptive1D}
In this section we introduce an a priori adaptive method for one dimensional ODEs and provide asymptotic rates for the approximation error and computational cost in Theorem \ref{thm:error_alg1}. Subsequent remarks shortly discuss possible extensions to separable ODEs and use of higher-order integrators. Lastly, we compare our approach to using uniform steps in Section \ref{sec:adaptiveVsUniform1D}.

We consider the autonomous one-dimensional ODE
\begin{equation}
\label{eq:ode}
\begin{split}
    x'(t) &= b(x(t)), \qquad t \in (0, \tau)\\
    x(0) &= x_0,
\end{split}
\end{equation}
for right-hand sides that satisfy the following assumption. 
\begin{assumption}
The function \(b:D = [x_0, \infty) \to (0,\infty)\) with \(x_0>0\) satisfies
\begin{enumerate}[label=\arabic*), ref=\theassumption.\arabic*]
    \label{asmp:b}
    \item \(b \in C^1(D)\) with \(b'\) positive and increasing on \(D\). \label{asmp:b.C1_bDerPos}
    \item There exist positive monotone decreasing functions \(F, G\in C^1((0,\infty))\) such that \(-\frac{d}{dx}G(b(x)) \leq\frac{1}{b(x)} \leq -\frac{d}{dx}F(b(x))\) for all \(x\in D\) with 
    \mbox{\(\underset{x\to\infty}{\lim}F(x)=0\)} and \(-\frac{1}{F^{-1}(\epsilon)G'(F^{-1}(\epsilon))}=\mathcal{O}(\epsilon^{\alpha})\) for some \(\alpha>-2\) for sufficiently small \(\epsilon>0\). \label{asmp:b.FandG}
    \item There exist some \(k>1\) such that \(\int_{x_0}^\infty\frac{\sqrt{b'(kx)}}{b(x)} \, dx <\infty\). \label{asmp:b.integrabilityCond}
    \item The integral \(\tau = \int_{x_0}^\infty \frac{1}{b(x)}dx\) is finite. \label{asmp:b.tauFinite}
\end{enumerate}
\end{assumption}

\begin{remark}[Alternative to Assumption~\ref{asmp:b.FandG}]
    \label{rem:altAsmpThm1}
    An alternative to Assumption \ref{asmp:b.FandG} is to assume that \(b\in C^2(D)\) with \(b''(x) > 0\) and \(x \leq b'(x)^C\) for \(x \in D\) and some constant \(C > 0\). We will later on, in Remark \ref{rem:followAltAsmpThm1}, show that our main result for this section, Theorem \ref{thm:error_alg1}, also holds under this alternative assumption. 
\end{remark}

As $b$ is locally Lipschitz on $D$ we recall from~\cite[Chapter 2]{Walter1998} 
that~\eqref{eq:ode} has a unique solution \(x \in C^1([0,\tau))\), and 
Assumption~\ref{asmp:b.tauFinite} ensures a finite blow-up time. 

To account for how the dynamics depends on the initial condition, 
we let \(x(t; \tilde x_0, t_0)\) denote the solution of
\begin{equation*}
\begin{split}
    x'(t; \tilde x_0, t_0) &= b(x(t; \tilde x_0, t_0)),\qquad t\in(t_0,\tau)\\
    x(t_0) &= \tilde x_0,
\end{split}
\end{equation*}
where \(\tilde x_0\in [x_0, \infty)\) and \(t_0 \in [0, \infty)\). And for \(r\in [x_0, \infty)\), we 
set
\begin{equation}\label{eq:tau_1D}
\tau_r(\tilde x_0, t_0) = t_0 + \int_{\min\{\tilde x_0, r\}}^r\frac{1}{b(x)}\,dx,
\end{equation}
which then denotes the first time $t \ge t_0$ that $x(t; \tilde x_0, t_0)$ exceeds the value \(r\) (equivalently, the hitting time of \(r\) for the process \(x(t; \tilde x_0, t_0)\), when assuming that \(x_0 \leq r\)). 
We also introduce the short-hand notation \(\tau_r = \tau_r(x_0, 0)\), denoting the hitting time of \(r\) for \(x(t)\).

Let \(\bar x\) denote the forward Euler numerical solution of \(x(t)\), given by 
the scheme 
\begin{equation*}
    \bar{x}_{n+1} = \bar{x}_{n} + b(\bar{x}_{n})h_{n} \text{ for } n=0, 1, \hdots, N-1,
\end{equation*}
where \(h_{n} >0\) is an adaptive time step, \(\bar x_0 = x_0\), and \(N \in \N\) is the number of steps. 
In our case, \(N\) will be the smallest integer such that \(\bar x_N > r\) where \(r\) is chosen to control the error \(\abs{\tau-\tau_r}\) for a given tolerance \(\epsilon\). 
The iteration \(t_{n+1} = t_{n} + h_{n}\) for \(n = 0, 1, \hdots, N-1\) then defines the grid for our numerical solution, with \(\bar\tau \coloneqq t_N\) being our approximation of \(\tau_r\). 
As a consequence of Assumption~\ref{asmp:b}, we note that  \(\bar x_{n+1} \leq x(t_{n+1})\): If we assume that \(\bar x_n = x(t_n)\), then  
\begin{equation}
\label{eq:xBarBndX}
    \bar{x}_{n+1} = \bar x_n + h_nb(\bar x_n) = x(t_n) + h_nb(x(t_n)) \leq x(t_n) + \int_{t_n}^{t_{n+1}}b(x(t))\,dt = x(t_{n+1}),
\end{equation}
as \(b\) is monotone increasing. 

Our a priori adaptive method is presented in Algorithm \ref{alg:1d}. 
It estimates \(\tau\) by numerically approximating the auxiliary hitting-time \(\tau_r\). 
We chose \(r\) such that \(b(r) = F^{-1}(\epsilon)\) holds, as we from \eqref{eq:firstErrorTerm} have that this is sufficient to ensure \(\abs{\tau-\tau_r} \leq \epsilon\). 
The ODE is then solved using forward Euler with the adaptive step length \(h_n = \sqrt{\frac{\epsilon^2}{b'(\min\{k\bar x, r\})}}\) until it first passes the value \(r\). 
The time when \(r\) is reached, \(t_N\), is then our estimate \(\bar \tau\) of \(\tau_r\). 
The adaptive time steps \(h_n\) are chosen as they ensures \(\abs{\bar\tau - \tau_r} = \mathcal{O}(\epsilon)\), see proof of Theorem \ref{thm:error_alg1}.

\begin{algorithm}
    \caption{Estimating blow-up time of 1D ODE}
    \label{alg:1d}
    \begin{algorithmic}[1]
        \Require \(b,\, x_0,\, k,\, F^{-1},\,\epsilon > 0\).
        \State Set \(t = 0,\, N=0,\,\bar x = x_0\) and let \(r\) be such that \(b(r) = F^{-1}(\epsilon)\).
        \While{\(\bar x < r\)}
            \State \(h = \sqrt{\frac{\epsilon^2}{b'(\min\{k\bar x,\,r\})}}\).
            \State \(\bar x \leftarrow \bar x + b(\bar x)h\).
            \State \(t \leftarrow t + h\).
            \State \(N \leftarrow N+1\).
        \EndWhile
        \State \(\bar\tau = t\).
        \Ensure \(\bar\tau,\,N\).
    \end{algorithmic}
\end{algorithm}
As the number of floating point operations per iteration in Algorithm \ref{alg:1d} is bounded by a small fixed integer \(C\), we have that the total number of operations are \(\mathcal{O}(N)\). We will therefore define the computational cost to be the number of iterations \(N\).
The main result on the performance of our 1D numerical method is stated below. 

\begin{theorem}[Properties of Algorithm \ref{alg:1d}]
    \label{thm:error_alg1}
    Let Assumption~\ref{asmp:b} hold and let \(\bar\tau\) be calculated using Algorithm~\ref{alg:1d}.
    Then, for sufficiently small \(\epsilon > 0\), it holds that \(\abs{\bar\tau-\tau} = \mathcal{O}(\epsilon)\) and the computational cost of the method is \(\mathcal{O}\left(\epsilon^{-1}\right)\).
\end{theorem}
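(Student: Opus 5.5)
I will split the error as
\[
\abs{\bar\tau-\tau}\le\abs{\tau-\tau_r}+\abs{\tau_r-\bar\tau}
\]
and treat the two terms separately. The truncation term is handled by Assumption~\ref{asmp:b.FandG}:
\[
\tau-\tau_r=\int_r^\infty \frac{dx}{b(x)}\le -\int_r^\infty \frac{d}{dx}F(b(x))\,dx = F(b(r)) = \epsilon,
\]
since $F(b(x))\to 0$ as $x\to\infty$, using $b\to\infty$. This is the bound the paper labels \eqref{eq:firstErrorTerm}. The lower bound $-\tfrac{d}{dx}G(b(x))\le 1/b(x)$ will be used later for the cost of the final step.

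For the discretization term I will exploit the 1D bijection between time and state. Summing the exact per-step hitting times gives
\[
\tau_{\bar x_N}=\sum_{n<N}\int_{\bar x_n}^{\bar x_{n+1}}\frac{dx}{b(x)} .
\]
For each step, the local defect is
\[
h_n-\int_{\bar x_n}^{\bar x_{n+1}}\frac{dx}{b(x)}=\int_{\bar x_n}^{\bar x_{n+1}}\Big(\frac1{b(\bar x_n)}-\frac1{b(x)}\Big)dx .
\]
This is nonnegative because $b$ is increasing, consistent with \eqref{eq:xBarBndX}. It is bounded by
\[
\frac{b'(\xi)\,(b(\bar x_n)h_n)^2}{2\,b(\bar x_n)^2}\le \frac{b'(\bar x_{n+1})h_n^2}{2}
\]
by the mean value theorem and the monotonicity of $b'$.

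The key point is to show $\bar x_{n+1}\le k\bar x_n$ for small $\epsilon$, i.e.\ $b(\bar x_n)h_n\le (k-1)\bar x_n$. Then $b'(\bar x_{n+1})\le b'(\min\{k\bar x_n,r\})$ (for steps before the last), and the defect is at most $\epsilon^2/2$. To get $\bar x_{n+1}\le k\bar x_n$, I will use $h_n\le \epsilon/\sqrt{b'(x_0)}$ together with a growth bound on $b(x)/x$ on $[x_0,r]$. This is exactly where Assumption~\ref{asmp:b.FandG} has to control how fast $r(\epsilon)$ grows. I expect this to be the main obstacle: one must check that $\epsilon\, b(r)/(x_0\sqrt{b'(x_0)})$ is small, or else argue multiplicatively with $b(\bar x)/\sqrt{b'(k\bar x)}$, possibly via convexity, $b(x)\le b(x_0)+b'(x)(x-x_0)$.

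With this in hand, the total discretization error is at most $N\epsilon^2/2$ plus the overshoot $\tau_{\bar x_N}-\tau_r\le h_{N-1}$. So it suffices to prove $N=\mathcal{O}(\epsilon^{-1})$. For the count I will compare the sum with an integral:
\[
N=\sum_n 1=\sum_n \frac{h_n\sqrt{b'(\min\{k\bar x_n,r\})}}{\epsilon}
\lesssim \epsilon^{-1}\sum_n\int_{\bar x_n}^{\bar x_{n+1}}\frac{\sqrt{b'(k x)}}{b(x)}\,dx\,(1+\mathcal{O}(\epsilon)).
\]
Here the replacement of $h_n$ by the exact integral uses the per-step defect bound just proved, in relative form, and $b'(k\bar x_n)\le b'(kx)$ for $x\ge\bar x_n$. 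Assumption~\ref{asmp:b.integrabilityCond} then gives $N=\mathcal{O}(\epsilon^{-1})$.

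The steps with $k\bar x_n>r$ use $h=\epsilon/\sqrt{b'(r)}$. Their number is bounded by
\[
\frac{\tau_r-\tau_{r/k}}{h}\le \epsilon^{-1}\sqrt{b'(r)}\int_{r/k}^{r}\frac{dx}{b(x)} ,
\]
which is again controlled by the same integral tail. The last step $h_{N-1}\le\epsilon/\sqrt{b'(x_0)}$ is $\mathcal{O}(\epsilon)$. Finally I will verify, using the $G$-part of Assumption~\ref{asmp:b.FandG}, namely
\[
-\frac{1}{F^{-1}(\epsilon)\,G'(F^{-1}(\epsilon))}=\mathcal{O}(\epsilon^{\alpha}),
\]
that $b(r)$ does not grow so fast as to spoil the smallness condition in the obstacle step. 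Combining the pieces yields $\abs{\bar\tau-\tau}=\mathcal{O}(\epsilon)$ and cost $\mathcal{O}(\epsilon^{-1})$.
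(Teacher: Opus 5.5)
Your skeleton matches the paper's. You use the same error split and the same \(F\)-bound for the truncation term. You bound the per-step defect by \(\tfrac12 b'(\bar x_{n+1})h_n^2\) via monotonicity of \(b'\). You then reduce the problem to \(\bar x_{n+1}\le k\bar x_n\) and \(N=\mathcal{O}(\epsilon^{-1})\).

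The gap sits exactly at the step you flag as the main obstacle, which you leave open. The route you list first cannot work, because \(\epsilon\, b(r)\) is not small in general: for \(b(x)=x^2\) one has \(b(r)=F^{-1}(\epsilon)=\epsilon^{-2}\), so \(\epsilon b(r)/(x_0\sqrt{b'(x_0)})\sim\epsilon^{-1}\). Bounding \(h_n\) by the uniform \(\epsilon/\sqrt{b'(x_0)}\) throws away precisely the state dependence of the step that makes the method work.

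Your closing sentence does invoke the \(G\)-part of Assumption~\ref{asmp:b.FandG} for the smallness condition, and earlier you reserve it for the cost of the final step. But you aim it at the growth of \(b(r)\), which it cannot tame. What it controls is \(b'(r)\): evaluating \(-G'(b(x))\,b'(x)\le 1/b(x)\) at \(x=r\) gives
\[
b'(r)\le -\frac{1}{F^{-1}(\epsilon)\,G'(F^{-1}(\epsilon))}=\mathcal{O}(\epsilon^{\alpha}),\qquad\text{so}\qquad \epsilon\sqrt{b'(r)}=\mathcal{O}(\epsilon^{1+\alpha/2})\to0
\]
because \(\alpha>-2\). The missing idea is that \(\epsilon\sqrt{b'(r)}\) is the quantity that must be small, and that this is what the \(G\)-condition delivers.

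With it, your second option (convexity) closes the obstacle directly, as in the paper. Monotonicity of \(b'\) gives \(b(x)\le b(x_0)+x\,b'(x)\le c\,x\,b'(x)\) with \(c=1+b(x_0)/(x_0 b'(x_0))\). So for \(n<N\), where \(\bar x_n<r\), one has \(b(\bar x_n)h_n/\bar x_n\le c\,\epsilon\, b'(\bar x_n)/\sqrt{b'(\min\{k\bar x_n,r\})}\le c\,\epsilon\sqrt{b'(r)}\le k-1\) for small \(\epsilon\). The paper's auxiliary step \(g_{n-1}\) is only packaging for this estimate.

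The same bound is what justifies your relative-form replacement of \(h_n\) by \(\int_{\bar x_n}^{\bar x_{n+1}}dx/b(x)\) in the count. For \(n\le N-2\) the relative defect is at most \(\tfrac12 b'(\bar x_{n+1})h_n\le\tfrac12\epsilon\sqrt{b'(r)}=\mathcal{O}(\epsilon^{1+\alpha/2})\). That is not \(\mathcal{O}(\epsilon)\) as you wrote; for \(x^2\) it is of order \(\sqrt{\epsilon}\).

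Once repaired, your per-step count is a legitimate variant of the paper's argument. The paper instead bounds \(N\le\int_0^{\bar\tau}dt/h(t)\), splits at \(\tau_r\), and uses \(\bar x(t)\le x(t)\) to change variables. It then closes the inequality \(\abs{\bar\tau-\tau_r}\le C\epsilon+\abs{\bar\tau-\tau_r}\,\mathcal{O}(\epsilon^{1+\alpha/2})\). Your version works directly in state space and needs neither the comparison \(\bar x(t)\le x(t)\) nor that bootstrap.

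It also makes your separate count of steps with \(k\bar x_n>r\) unnecessary, since \(b'(\min\{k\bar x_n,r\})\le b'(k\bar x_n)\) always. Finally, the last step's defect, where \(\bar x_N\) may exceed \(r\), should simply be bounded by \(h_{N-1}=\mathcal{O}(\epsilon)\).
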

\begin{proof}
    For any \(r \geq x_0\) we have
    \begin{equation}
        \label{eq:errorSplit}
        \abs{\bar\tau-\tau} \leq \abs{\bar\tau-\tau_r} + \abs{\tau_r-\tau}.
    \end{equation}
    To prove our claim, it suffices to show both terms on the right-hand side are \(\mathcal{O}(\epsilon)\). 
    From Algorithm \ref{alg:1d} we have that \(r(\epsilon)\) is chosen such that \(b(r(\epsilon)) = F^{-1}(\epsilon)\), where we write \(r\) as a function of \(\epsilon\) to emphasize its dependence of the tolerance.
    For the second term, we then get
    \begin{equation}
        \label{eq:firstErrorTerm}
            \abs{\tau_{r(\epsilon)}-\tau} = \int_{r(\epsilon)}^\infty\frac{1}{b(x)}\,dx \stackrel{\ref{asmp:b.FandG}}{\leq} \int_{r(\epsilon)}^\infty-\frac{d}{dx}F(b(x)) \,dx = F(b(r(\epsilon)))=F(F^{-1}(\epsilon))=\epsilon.
    \end{equation}
    For later reference, from the lower bound of \(\frac{1}{b(x)}\) in Assumption \ref{asmp:b.FandG} we have that \(-\frac{d}{dx}G(b(x)) = -G'(b(x))b'(x) \leq \frac{1}{b(x)}\), thus by setting \(x = r(\epsilon)\) we get
    \begin{equation}
    \label{eq:boundB_der_r}
        b'(r(\epsilon)) \leq -\frac{1}{b(r(\epsilon))G'(b(r(\epsilon)))} = -\frac{1}{F^{-1}(\epsilon)G'(F^{-1}(\epsilon))} \stackrel{\ref{asmp:b.FandG}}{=} \mathcal{O}(\epsilon^\alpha),
    \end{equation}
    for some \(\alpha>-2\).

    For the first term of \eqref{eq:errorSplit}, we have
    \begingroup 
    \allowdisplaybreaks
    \begin{align}
    \label{eq:secondTermExp}
        \abs{\bar\tau-\tau_{r(\epsilon)}} &= \abs{\tau_{r(\epsilon)}(\bar{x}_N, t_N)-\tau_{r(\epsilon)}(\bar{x}_0, t_0)}\nonumber\\
        & \leq \sum_{n=1}^{N-1}\abs{\tau_{r(\epsilon)}(\bar{x}_n, t_n) - \tau_{r(\epsilon)}(\bar{x}_{n-1}, t_{n-1})} \nonumber \\
        &\hspace{6.5mm} + \abs{\tau_{r(\epsilon)}(\bar{x}_N, t_N) - \tau_{r(\epsilon)}(\bar{x}_{N-1}, t_{N-1})} \nonumber\\
        &\stackrel{\eqref{eq:tau_1D}}{\leq} \sum_{n=1}^{N-1}\abs{h_{n-1} - \int_{\bar x_{n-1}}^{\bar x_n}\frac{1}{b(x)}\, dx} + \abs{h_{N-1} - \int_{\bar x_{N-1}}^{r(\epsilon)}\frac{1}{b(x)}\, dx} \nonumber \\
        &\hspace{-1mm}\stackrel{\eqref{asmp:b.C1_bDerPos}}{\leq} \sum_{n=1}^{N-1}\abs{\left(\frac{1}{b(\bar x_{n-1})}-\frac{1}{b(\bar x_{n})}\right)b(\bar x_{n-1})h_{n-1}} + h_{N-1} \nonumber \\
        &\leq \sum_{n=1}^{N-1} b'(\bar x_n)h_{n-1}^2 + h_{N-1}.
    \end{align}
    \endgroup
    In the last inequality we take a first-order Taylor expansion of \(b(\bar x_n)\) around \(\bar x_{n-1}\) and use the monotonicity of $b'$ to $\max_{\theta \in [\bar x_{n-1}, \bar x_n]} |b'(\theta)| \le |b'(\bar x_n)|$.
    Moreover, one can show the the last line above is proportional to the sensitivity \(\partial_{x_0} \tau(r, x_0)\) times the local error in \(x\).
    
    The adaptive step length used in Algorithm \ref{alg:1d} gives \(h_{n-1} = \sqrt{\frac{\epsilon^2}{b'(\min\{k\bar x_{n-1},\,r(\epsilon)\})}}\), but for technical reasons, let us now introduce the following alternative step length:
    \begin{align}
        \label{eq:restrictions_h}
        g_{n-1} &= \min\Bigg\{ \underbrace{\sqrt{\frac{\epsilon^2}{b'(\min\{k\bar x_{n-1},\,r(\epsilon)\})}}}_{h_{n-1}}, \underbrace{\frac{(k-1)\bar x_{n-1}}{b(\bar x_{n-1})}}_{\tilde g_{n-1}} \Bigg\},
    \end{align}
    where \(k > 1\) is some value such that Assumption \ref{asmp:b.integrabilityCond} is satisfied. 
    Since \(g_{n-1} \leq \tilde g_{n-1}\), we have that
    \[\bar x_n = \bar x_{n-1} + g_{n-1}b(\bar x_{n-1}) \leq k\bar x_{n-1}.\] 
    Combining this with~\eqref{eq:secondTermExp} 
    and using that \(\bar x_{n} < r(\epsilon)\) for \(n < N\) yields
    \[\abs{\bar\tau-\tau_{r(\epsilon)}} \leq \sum_{n=1}^{N-1}b'(\min\{k\bar x_{n-1},\, r(\epsilon)\})g_{n-1}^2 + g_{N-1}\]
    Next, the other constraint on \(g_{n-1}\), \(h_{n-1}\), yields
    \begin{align}
        \label{eq:secondErrorTermFinal}
        \abs{\bar\tau-\tau_{r(\epsilon)}} \leq N\epsilon^2 + \mathcal{O}(\epsilon),
    \end{align}
    and it remains to bound \(N\). We first need to determine which of the constraints on \(g_{n-1}\) in \eqref{eq:restrictions_h} that is the most restrictive. For the quotient \(\frac{h_{n-1}}{\tilde g_{n-1}}\), we get
    \begin{align*}
        \frac{\sqrt{\frac{\epsilon^2}{b'(\min\{k\bar x_{n-1},\, r(\epsilon)\})}}}{\frac{(k-1)\bar x_{n-1}}{b(\bar x_{n-1})}} &= \frac{\epsilon b(\bar x_{n-1})}{(k-1)\bar x_{n-1}\sqrt{b'(\min\{k\bar x_{n-1},\, r(\epsilon)\})}} \\
        &\leq \frac{\epsilon c\bar x_{n-1} b'(\bar x_{n-1})}{(k-1)\bar x_{n-1}\sqrt{b'(\min\{k\bar x_{n-1},\, r(\epsilon)\})}} \\
        &\leq \frac{\epsilon c \sqrt{b'(r(\epsilon))}}{k-1} \\
        &\overset{\eqref{eq:boundB_der_r}}{=} \mathcal{O}(\epsilon^{1+\frac{\alpha}{2}}),
    \end{align*}
    where the transition to the second line uses that there exists a \(c>0\) such that \(b(x) = b(x_0) + b'(x)x \leq cxb'(x)\) for all \(x \in D\). 
    Thus for sufficiently small \(\epsilon\) we have that \(g_{n-1} = h_{n-1}\).

    Introducing the continuous extensions \(\bar x(t) = \bar x_{n-1}\) and \(h(t) = h_{n-1}\) for \(t \in [t_{n-1}, t_n)\), we obtain for sufficiently small \(\epsilon >0\) that
    \begingroup 
    \allowdisplaybreaks
    \begin{align}
    \label{eq:costEst1D}
        N &= \sum_{n=1}^{N}\frac{h_{n-1}}{h_{n-1}} \nonumber \\
        &\leq \int_0^{\bar\tau}\frac{1}{h(t)} \,dt \nonumber \\
          &\leq \frac{1}{\epsilon}\int_0^{\tau_{r(\epsilon)}} \sqrt{b'(k\bar x(t))} \,dt + \frac{1}{\epsilon}\int_{\tau_{r(\epsilon)}}^{\max\{\tau_{r(\epsilon)}, \bar\tau\}}\sqrt{b'(r(\epsilon))} \,dt \nonumber \\
          &\leq\frac{1}{\epsilon}\int_{x_0}^{\infty}\frac{\sqrt{b'(kx)}}{b(x)}\,dx + \frac{\abs{\bar\tau-\tau_{r(\epsilon)}}}{\epsilon}\sqrt{b'(r(\epsilon))} \nonumber \\
          &\stackrel{\ref{asmp:b.integrabilityCond}}{\le} \frac{C}{\epsilon} + \abs{\bar\tau-\tau_{r(\epsilon)}}\mathcal{O}(\epsilon^{\frac{\alpha}{2}-1}),
    \end{align}
    \endgroup
    where the transition from the second to the third line uses \eqref{eq:xBarBndX}, as this shows that \(\bar x(t) \leq x(t)\), allowing a change of variable in the left integral using \(dt = \frac{dx}{b(x)}\). 

    Inserting into~\eqref{eq:secondErrorTermFinal} gives
    \[\abs{\bar\tau-\tau_{r(\epsilon)}} \leq C\epsilon + \abs{\bar\tau-\tau_{r(\epsilon)}}\mathcal{O}(\epsilon^{1+\frac{\alpha}{2}}) \implies 
    \abs{\bar\tau-\tau_{r(\epsilon)}} 
    \leq \frac{C\epsilon}{(1-\mathcal{O}(\epsilon^{1+\frac{\alpha}{2}}))} 
    = C'\epsilon,\]
    as \(1+\frac{\alpha}{2}>0\), and thus
    \(N \stackrel{\eqref{eq:costEst1D}}{=} \mathcal{O}(\epsilon^{-1})\). 
    
    We then deduce from the above that 
    \[
    \abs{\bar\tau-\tau} \leq \abs{\bar\tau-\tau_{r(\epsilon)}} + \abs{\tau_{r(\epsilon)}-\tau} \leq C'\epsilon + \epsilon = \mathcal{O}(\epsilon).
    \]
    Furthermore, the computational cost is given by \(C''N \leq \frac{C'C''}{\epsilon} = \mathcal{O}(\epsilon^{-1})\), for some constant \(C''\in\N\).
\end{proof}

\begin{remark}[Alternative to Assumption \ref{asmp:b.FandG} revisited]
\label{rem:followAltAsmpThm1}
    We will here show that Theorem \ref{thm:error_alg1} holds under the alternative to Assumption \ref{asmp:b.FandG} given in Remark \ref{rem:altAsmpThm1}.
    Note that \(\frac{1}{b(x)} = \frac{d}{dx}\left(\frac{\log(b(x))}{b'(x)}\right) + \frac{\log(b(x))}{b'(x)^2}b''(x)\) and that integrating the first term of the right-hand side from \(r(\epsilon)\) to \(\infty\) yields the negative contribution \(-\frac{\log(b(r(\epsilon)))}{b'(r(\epsilon))}\).
    Then
    \[\abs{\tau_{r(\epsilon)} - \tau} \leq \int_{r(\epsilon)}^\infty \frac{\log(b(x))}{b'(x)^2}b''(x)\, dx \leq \int_{r(\epsilon)}^\infty \frac{(C+ 1 )\log(b'(x))}{b'(x)^2}b''(x)\, dx,\]
    where the second inequality follows as \(b(x) \leq xb'(x) \leq b'(x)^{C+1}\), using the alternative assumptions in Remark \ref{rem:altAsmpThm1}. 
    Integrating this expression gives \(\abs{\tau_{r(\epsilon)}-\tau}\leq\frac{(C+1)\log(b'(r(\epsilon)))}{b'(r(\epsilon))}\). And 
    choosing the threshold function $r(\epsilon)$ such that 
    \(b'(r(\epsilon)) = \epsilon^{-1}\log(\epsilon^{-1})\) suffices to ensure that \(\abs{\tau_{r(\epsilon)}-\tau} = \mathcal{O}(\epsilon)\).
    This yields \(b'(r(\epsilon)) = \mathcal{O}(\epsilon^\alpha)\) for some \(\alpha>-2\). 
    In the proof of Theorem~\ref{thm:error_alg1}, the above argument replaces the start of the proof, up to and including \eqref{eq:boundB_der_r}. The remaining part of the proof still applies under the alternative assumption. 
\end{remark}

\begin{remark}[Extension to separable ODEs]
\label{rem:seperableODEs}
The method can in principle be extended to stopping times of separable non-autonomous ODEs on the form
\begin{equation*}
\begin{split}
    x'(t) &= f(t, x(t)) = g(t)b(x(t)), \qquad t \in (0, \tau)\\
    x(0) &= x_0,
\end{split}
\end{equation*}
where \(g:C((0,\infty))\) is a positive and locally integrable function. 
The explosion time $\tau$ then is redefined through the implicit equation 
\(\int_0^\tau g(t)\,dt = \int_{x_0}^\infty \frac{1}{b(x)}\,dx\), or, equivalently, 
\[
\tau = G^{-1}\left(\int_{x_0}^\infty \frac{1}{b(x)}\,dx + G(0)\right),
\]
where \(G\) denotes an antiderivative of \(g\).
We can then calculate the first term in the argument of \(G^{-1}\) using Algorithm \ref{alg:1d}.
\end{remark}

\begin{remark}[Higher-order methods]
\label{rem:ho_sceme}
    Theorem \ref{thm:error_alg1} applies to the forward Euler numerical integrator, which is a first-order method. However, under the more restrictive constraints on $b$ given below, it is possible to improve the efficiency of the method further by using a higher-order Taylor scheme. 
    This higher-order method is for some integer \(2\leq\bar m \leq m\) given by \(\bar x_{n+1} = \bar x_n + \sum_{i=1}^{\bar m}\frac{x^{(i)}(\hat t)}{i!}h^i_{n}\), where \(\hat t\) is such that \(x(\hat t)= \bar x_n\). 
    The time stepping needs to be redefined as \(h_n = \frac{\epsilon^{\frac{1}{\bar m}}}{b'(\min\{k\bar x_n, r(\epsilon)\})^{\frac{\bar m}{1+\bar m}}}\). 

    The extension of Assumption~\ref{asmp:b} for the higher-order method are as follows: 
    Let \(b\in C^m(D)\) for \(m \geq 3\), let there exist some \(C>0\) such that \(\frac{d^m}{dx^m}b(x(t)) \leq Cb(x(t))b'(x(t))^m\), let~\ref{asmp:b.FandG} hold 
    for \(\alpha > -\frac{m+1}{m}\) and let there exist \(k>1\) such that \(\int_{x_0}^{\infty}\frac{b'(kx)^{\frac{\bar m}{\bar m + 1}}}{b(x)}\,dx < \infty\). Then one can show 
    that \(\abs{\tau - \bar\tau} = \mathcal{O}(\epsilon)\) is reached 
    at \(\mathcal{O}\left(\epsilon^{-\frac{1}{\bar m}}\right)\) computational cost. 
    For brevity, the proof of this result is omitted.
\end{remark}

\subsection{Comparing adaptive and uniform time steps}
\label{sec:adaptiveVsUniform1D}
Theorem \ref{thm:error_alg1} shows that our adaptive method reaches an error of order \(\mathcal{O}(\epsilon)\) with a cost of order \(\mathcal{O}(\epsilon^{-1})\). 
We will here find how much we gain, measured in computational cost, by using our adaptive algorithm instead of a naive approach using uniform steps. 
For brevity, we will assume that \(b\) satisfies Assumption~\ref{asmp:b} with \ref{asmp:b.FandG} replaced by the alternative condition in Remark~\ref{rem:altAsmpThm1}, as this provides a convenient bound on how \(b'(r)\) grows in $r$, see Remark \ref{rem:followAltAsmpThm1}. 
From \eqref{eq:secondTermExp} in the proof of Theorem \ref{thm:error_alg1} we have
\begin{align*}
    \abs{\bar\tau - \tau_{r(\epsilon)}} &\leq \sum_{n=1}^{N-1}b'(\bar x_n)h^2 + h\\
    &\leq h\int_{0}^{\tau_{r(\epsilon)}}b'(x(t))\,dt + h\int_{\tau_{r(\epsilon)}}^{\bar\tau}b'(r(\epsilon))\,dt + h \\
    &= h\log\left(\frac{b(r(\epsilon))}{b(x_0)}\right) + h\abs{\bar\tau - \tau_{r(\epsilon)}}b'(r(\epsilon)) + h,
\end{align*}
as \(\frac{d}{dt}\log(b(x(t))) = b'(x(t))\). Rearranging, we see that
\[\abs{\bar\tau - \tau_r} \leq \frac{h\log\left(\frac{b(r)}{b(x_0)}\right) + h}{1 - hb'(r)}\]
We want to pick \(h\) such that \(\abs{\bar\tau - \tau_r} = \mathcal{O}(\epsilon)\), this implies 
\[h = \min\left\{\bar h, \tilde h\right\} =\min\left\{\frac{\epsilon}{\log\left(\frac{b(r)}{b(x_0)}\right)}, \frac{1}{2b'(r)}\right\}.\]
The cost associated with \(\bar h\) is
\[N = \sum_{n=1}^N\frac{\bar h}{\bar h} \leq \frac{1}{\epsilon}\int_{0}^{\bar\tau}\log\left(\frac{b(r(\epsilon))}{b(x_0)}\right)\,dt = \frac{\bar\tau}{\epsilon}\log\left(\frac{b(r(\epsilon))}{b(x_0)}\right),\]
while we for \(\tilde h\) have
\[N =  \sum_{n=1}^N\frac{\tilde h}{\tilde h} \leq \int_0^{\bar\tau}2b'(r(\epsilon))\,dt = \frac{2\bar\tau}{\epsilon}\log(\epsilon^{-1}),\]
when using that \(b'(r(\epsilon)) = \mathcal{O}(\epsilon^{-1}\log(\epsilon^{-1}))\), 
cf.~Remark~ \ref{rem:followAltAsmpThm1}.
This shows that using uniform steps increase the asymptotic cost with at least a factor 
$\log(\epsilon^{-1})$ relative to the adaptive method. 
We will explore this numerically for different examples in Section~\ref{sec:numExp}.  

\section{Extension to higher dimensions}
\label{sec:higherDim}

This section introduces the higher-dimensional analogue of the adaptive method in Section \ref{sec:adaptive1D}. 
The asymptotic rates for the approximation error and computational cost is provided in Theorem \ref{thm:propertiesAlgRn}. 
Thereafter we explore stronger, but often verifiable, regularity conditions on the vector field \(b\), cf. Lemmas \ref{lemma:SuffCondAsmpGradTau} and \ref{lemma:suffCondAsmpbDerOverBBound}.
Finally, in Section \ref{sec:RnLogDyn} we extend the method to slower growing vector fields, such as \(\abs{b(x)} = \abs{x}\log(\abs{x})\).

We now consider the higher-dimensional ODE 
\begin{equation}
\label{eq:systemODE}
    \begin{split}
        \dot{x} &= b(x) \\
        x(0) &= x_0 \in D,
    \end{split}
\end{equation}
where $D \coloneq \{x \in \R^n \mid |x| >\delta\}$ for 
a given $\delta >0$ and where \(\abs{\cdot}\) denotes the Euclidean norm on \(\R^n\). Conditions on $b:D \to \R^n$ that ensure existence of a unique $C^1$-solution
that blows up in finite time are detailed in Assumption~\ref{asmp:Rn} below. Our objective is once again to compute the blow up time 
\begin{equation*}
\tau(x_0) = \inf \left\{t \geq 0 \,\middle\vert\, \lim_{s \uparrow t} |x(s)| = \infty, x(0) = x_0\right\}.
\end{equation*}

Before presenting assumptions on \(b\), we introduce some notation. 
Let \(\norm{\cdot}\) denote the induced matrix 2-norm,  
$B(\xi) \coloneqq \{x \mid |x| < \xi\}$ and \(b': D \to \R^{n \times n}\) will be used to denote the Jacobian of \(b\). Furthermore, let \(r:(0,\infty) \to (\delta,\infty)\) denote a problem-dependent, monotone-decreasing and continuous function 
satisfying \(\underset{\epsilon\to 0}{\lim}r(\epsilon) = \infty\).
For $\epsilon >0$, consider the subset $D(\epsilon) \coloneqq D \cap B(r(\epsilon)) \subset D$
and let
\[
\tau_\epsilon(x_0) \coloneqq \underset{t\geq0}{\min}\{ t \ge 0 \mid |x(t)| \geq r(\epsilon), \; x(0) = x_0\} 
\]
denote the solution's exit-time of the set $D(\epsilon)$.

\begin{assumption}
\label{asmp:Rn}
For the ODE~\eqref{eq:systemODE}, the function \(b\in C^1(D, \R^n)\) 
satisfies the following conditions:
        \begin{enumerate}[label=\arabic*), ref=(\theassumption.\arabic*)]
        \item There exist \(\check{C}, \alpha > 0\) such that \(\check C \abs{x}^{2+\alpha} \leq b(x)\cdot x\) for all \(x \in D\).\label{asmp:Rn.lwrAndUprBndGrowthSqrdNorm}
        
        \item There exist \(C_1 > 0\) such that for any \(\epsilon>0\) and all \(x_0 \in D(\epsilon)\), we have that \(\abs{\nabla \tau_\epsilon(x_0)} \leq \frac{C_1}{\abs{b(x_0)}}\). \label{asmp:Rn.BndGradTau}
        
        \item At least one of the following holds:
        \begin{enumerate}[label=\alph*), ref=(\theassumption.3\alph*)]
            \item There is a constant \(C_2 \in (0,1]\) such that \(C_2 \leq \frac{b(x)\cdot x}{\abs{b(x)}\abs{x}}\) holds for all \(x\in D\).\label{asmp:Rn.directionalityOfB.ineq}
            \item For all \(x\in D\), we have \(x_ib_i(x)\geq0\) for \(i \in \{1, 2, \hdots, n\}\).\label{asmp:Rn.directionalityOfB.sign}
        \end{enumerate}
        \label{asmp:Rn.directionalityOfB}
        
        \item We can find constants \(C_3, C_4 >0,\, \nu>1,\, \upsilon \geq \max\{1-\alpha, 0\}\) such that for \(h>0\) sufficiently small, we have \(\frac{\sqrt{\norm{b'(x)}}}{\abs{b(x+b(x)h)}} \leq \frac{C_3}{\abs{x + b(x)h}^\nu}\) and \(\frac{\norm{b'(x)}}{\abs{b(x)}} \leq \frac{C_4}{\abs{x}^\upsilon}\) for all \(x \in D\).
        \label{asmp:Rn.bDerOverBBound}
        \item Given \(h>0\) sufficiently small, we have that \(\abs{b(x)} \leq \abs{b(x + b(x)h)}\) for any \(x \in D\). \label{asmp:Rn.bndNormBDirection}
        \item There is a constant \(k>1\) such that for any \(\epsilon>0, \theta\in(0,1)\) and \( x\in D(\epsilon)\), it holds that
        \[\norm{b'\left(x + \frac{\epsilon}{\sqrt{\norm{b'(x)}}}\theta b(x)\right)} \leq k\norm{b'(x)}.\] \label{asmp:Rn.bndBdNextByPrev}
    \end{enumerate}
\end{assumption}
We note that~\ref{asmp:Rn.lwrAndUprBndGrowthSqrdNorm} implies finite-time blow up of any solution of the ODE as $d |x|^2/dt > 2\check C |x|^{2+\alpha}$, and $b \in C^1(D)$ ensures there exists a unique solution $x\in C^1([0,\tau), \R^n)$.

\begin{remark}[Monotonicity of \(\abs{\bar x_n}\)]
    Note that 
    \[\abs{\bar x_{n}}^2 = \abs{\bar x_{n-1} + b(\bar x_{n-1})h_{n-1}}^2 = \abs{\bar x_{n-1}}^2 + 2\bar x_{n-1} \cdot b(\bar x_{n-1})+ \abs{b(\bar x_{n-1})}^2h_{n-1}^2,\]
    then a consequence of Assumption \ref{asmp:Rn.directionalityOfB} is that
    \(\abs{\bar x_{n-1}}\leq\abs{\bar x_{n}}\).
\end{remark}

The multidimensional version of our method is given in Algorithm \ref{alg:Rn}. 
The method estimates \(\tau\) by approximating \(\tau_\epsilon\), where \(r(\epsilon)\) is set such that \(\abs{\tau-\tau_\epsilon} = \mathcal{O}(\epsilon)\), see Lemma \ref{lemma:boundTauDiff}. 
This is done by solving the system of ODEs using forward Euler with the adaptive step length \(h_n = \frac{\epsilon}{\sqrt{\norm{b'(\bar x)}}}\) until the norm of the solution first passes \(r(\epsilon)\). 
The current time of the solver, \(t_N\), is then returned as our estimate \(\bar\tau\). The adaptive step lengths are chosen to ensure \(\abs{\bar\tau - \tau_\epsilon} = \mathcal{O}(\epsilon)\), see proof of Theorem \ref{thm:propertiesAlgRn}.
\begin{algorithm}
    \caption{Estimating blow-up time of ODE in \(\R^n\)}
    \label{alg:Rn}
    \begin{algorithmic}[1]
        \Require \(b,\, x_0,\,\alpha,\, \check{C},\,\epsilon > 0\).
        \State Set \(t = 0,\, N=0,\,  \bar x = x_0\) and \(r(\epsilon) = \left(\frac{1}{\check{C}\alpha\epsilon}\right)^{\frac{1}{\alpha}}\).
        \While{\(\abs{\bar x} \leq r(\epsilon)\)}
            \State \(h = \frac{\epsilon}{\sqrt{\max\{\norm{b'(\bar x)}, 1\}}}\)
            \State \(\bar x \leftarrow \bar x + b(\bar x)h\).
            \State \(t \leftarrow t + h\).
            \State \(N \leftarrow N+1\).
        \EndWhile
        \State \(\bar\tau = t\).
        \Ensure \(\bar\tau,\,N\).
    \end{algorithmic}
\end{algorithm}

Note that Algorithm \ref{alg:Rn} is very similar to Algorithm \ref{alg:1d}, with the main differences being how \(r(\epsilon)\) is calculated and that the \(\R^n\)-version does not need a factor \(k\) in the argument of \(b'\). However, the assumptions on \(b\) are stricter in the \(\R^n\) case compared to the 1D case.

From our definition of \(D\) and \(D(\epsilon)\) we have a disjoint partition \(\partial D(\epsilon) = \partial D_1(\epsilon)\sqcup\partial D_2(\epsilon)\), where \(\partial D_1(\epsilon)\) is the surface of the ball \(B(r(\epsilon))\), while \(\partial D_2(\epsilon)\) is the surface of \(B(r(\delta))\). 
Note from Assumption \ref{asmp:Rn} that if \(x_0\in D(\epsilon)\), the process \(x(t)\) will never hit the inner boundary \(\partial D_2(\epsilon)\) as \(x\cdot b(x) > 0\) for all \(x \in D\). 
In addition, we have that \(\tau_\epsilon\) satisfies the Hamilton-Jacobi PDE 
\begin{equation}
\label{eq:PDE_exit_time}
\begin{split}
    \nabla\tau_\epsilon(x)\cdot b(x) &= -1,\quad x\in D(\epsilon) \\
    \tau_\epsilon(x) &= 0,\hspace{6.25mm} x\in\partial D_1(\epsilon),
\end{split}
\end{equation}
giving a characterization for \(\partial D_1(\epsilon)\). Note that the characteristics of \(\tau_\epsilon\) are simply the flows \(x(t)\) solving \eqref{eq:systemODE}. Due to our assumptions on \(b\), we have that these flows are well-behaved in such a way that the unique solution of \eqref{eq:PDE_exit_time} is \(\tau_\epsilon \in C^1(\overline{D(\epsilon)})\), cf.~\cite[Chapter 3]{Evans2022}.

Before proceeding to the main result of this section, we need the following lemma providing an expression for \(r(\epsilon)\) that guarantees that we can control the error \(\abs{\tau(x_0)-\tau_\epsilon(x_0)}\).
\begin{lemma}
\label{lemma:boundTauDiff}
Let \(r(\epsilon) = \left(\frac{1}{\check{C}\alpha\epsilon}\right)^{\frac{1}{\alpha}}\) for \(\epsilon > 0\).
Under Assumption \ref{asmp:Rn.lwrAndUprBndGrowthSqrdNorm} it then holds for all $x_0 \in D$ that 
\[
\abs{\tau(x_0)-\tau_\epsilon(x_0)} \leq \epsilon \quad \text{and} \quad 
\tau(x_0) \leq \frac{1}{\check{C}\alpha\abs{x_0}^\alpha}.
\]
\end{lemma}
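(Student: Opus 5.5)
Set $u(t) = \abs{x(t)}^2$ along the solution of \eqref{eq:systemODE} with $x(0)=x_0\in D$. Assumption~\ref{asmp:Rn.lwrAndUprBndGrowthSqrdNorm} gives the differential inequality
\[
u'(t) = 2x(t)\cdot b(x(t)) \geq 2\check C\, u(t)^{1+\alpha/2},
\]
valid for $t\in[0,\tau(x_0))$. In particular $u$ is strictly increasing, so $\abs{x(t)}$ increases monotonically and the solution stays in $D$.

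The plan is to pass to the variable $w(t) = u(t)^{-\alpha/2} = \abs{x(t)}^{-\alpha}$, which linearises the comparison. We compute
\[
w'(t) = -\tfrac{\alpha}{2}u^{-\alpha/2-1}u' \leq -\alpha\check C .
\]
Integrating from $s$ to $t$ then gives
\[
\abs{x(t)}^{-\alpha} \leq \abs{x(s)}^{-\alpha} - \alpha\check C (t-s)
\]
for all $0\le s\le t<\tau(x_0)$. Since the left side is positive, this forces $t-s < \frac{1}{\check C\alpha\abs{x(s)}^\alpha}$.

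Both claims follow from this bound.
\begin{itemize}
\item Taking $s=0$ and letting $t\uparrow\tau(x_0)$ yields $\tau(x_0)\leq \frac{1}{\check C\alpha\abs{x_0}^\alpha}$, which is the second claim. (The same computation also shows the blow-up time is finite.)
\item For the first claim, the case $\abs{x_0}\geq r(\epsilon)$ is immediate: then $\tau_\epsilon(x_0)=0$, and the bound just proved gives $\tau(x_0)\le \frac{1}{\check C\alpha r(\epsilon)^\alpha}=\epsilon$.
\item Otherwise, by continuity and monotonicity of $\abs{x(t)}$, the time $s=\tau_\epsilon(x_0)<\tau(x_0)$ satisfies $\abs{x(s)} = r(\epsilon)$. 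Applying the bound from $s$ (equivalently, applying the second claim to the restarted autonomous flow from $x(s)$) gives
\[
0\le \tau(x_0)-\tau_\epsilon(x_0) \le \frac{1}{\check C\alpha r(\epsilon)^\alpha} = \epsilon
\]
by the choice of $r(\epsilon)$.
\end{itemize}

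No step is a real obstacle. The only care needed is to justify the limit $t\uparrow\tau(x_0)$, using the positivity of $w$ on $[0,\tau(x_0))$, and the semigroup/restart property: $\tau(x_0) = \tau_\epsilon(x_0) + \tau(x(\tau_\epsilon(x_0)))$, which holds by uniqueness of solutions since $b\in C^1(D)$.
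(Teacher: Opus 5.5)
Your proof is correct and follows essentially the same route as the paper: both integrate the comparison inequality $\frac{d}{dt}\abs{x}^2 \geq 2\check C\abs{x}^{2+\alpha}$ from the hitting time of $r(\epsilon)$. The paper does this by substituting $y=\abs{x}^2$ in $\int dt$, you do it by integrating $w=\abs{x}^{-\alpha}$, which is the same computation. Your version is slightly more careful: it handles the case $\abs{x_0}\geq r(\epsilon)$ and states the bound on $\tau(x_0)$ as an inequality, where the paper's proof writes an equality.
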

\begin{proof}
Assumption \ref{asmp:Rn.lwrAndUprBndGrowthSqrdNorm} yields that 
\( \frac{d\abs{x(t)}^2}{dt} \ge 2\check{C}\abs{x(t)}^{2+\alpha}\). 
Introducing \(y = \abs{x}^2\), we obtain that
\[\tau(x_0)-\tau_\epsilon(x_0) = \int_{\tau_\epsilon(x_0)}^{\tau(x_0)}\,dt \leq \frac{1}{2\check{C}}\int_{r(\epsilon)^2}^{\infty} \frac{1}{y^{1+\frac{\alpha}{2}}}\,dy 
= \frac{1}{\check{C}\alpha r(\epsilon)^\alpha} = \epsilon,\]
and
\[
\tau(x_0) = \int^{\tau(x_0)}_{0} \, dt = \frac{1}{\check{C}\alpha\abs{x_0}^\alpha}\, .
\]
\end{proof}

We can then proceed to our main finding. 
\begin{theorem}[Properties of Algorithm \ref{alg:Rn}]
\label{thm:propertiesAlgRn}
    Let Assumption \ref{asmp:Rn} hold and \(\bar\tau\) be calculated using Algorithm \ref{alg:Rn}. 
    Then, for any \(x_0 \in D\) and any sufficiently small \(\epsilon>0\), it holds that
    \begin{equation*}
    \abs{\bar\tau(x_0)-\tau(x_0)} = \mathcal{O}\left(\epsilon\right)
    \quad \text{and} \quad \mathrm{Cost}(\bar \tau) = \mathcal{O}(\epsilon^{-1}). 
\end{equation*} 
\end{theorem}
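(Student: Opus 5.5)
We will mirror the proof of Theorem~\ref{thm:error_alg1}. First split
\[
\abs{\bar\tau-\tau} \leq \abs{\bar\tau-\tau_\epsilon(x_0)} + \abs{\tau_\epsilon(x_0)-\tau(x_0)},
\]
and bound the second term by $\epsilon$ using Lemma~\ref{lemma:boundTauDiff}. For the first term we telescope along the numerical trajectory. Since $\tau_\epsilon(x(t;\bar x_n,t_n))+t$ is constant along exact flows, we write $\tau_\epsilon(\bar x_N) + t_N - \tau_\epsilon(x_0)$ as
\[
\sum_{n} \bigl[\tau_\epsilon(\bar x_{n+1}) + h_n - \tau_\epsilon(\bar x_n)\bigr].
\]
(For the last step we use that $\tau_\epsilon$ vanishes on $\partial D_1(\epsilon)$, and treat the overshoot $\abs{\bar x_N}>r(\epsilon)$ separately as an $\mathcal{O}(h_{N-1})$ term, exactly as in the 1D proof.) For each interior step we use the PDE~\eqref{eq:PDE_exit_time}: $h_n = -h_n\nabla\tau_\epsilon(\bar x_n)\cdot b(\bar x_n)$. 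The mean value theorem then gives
\[
\tau_\epsilon(\bar x_{n+1}) - \tau_\epsilon(\bar x_n) + h_n = h_n\bigl(\nabla\tau_\epsilon(\xi_n)-\nabla\tau_\epsilon(\bar x_n)\bigr)\cdot b(\bar x_n).
\]
Since this needs second derivatives of $\tau_\epsilon$, which we do not control, we instead compare with the exact flow. Let $y(s)=x(s;\bar x_n,t_n)$. Then $\tau_\epsilon(y(t_{n+1}))+h_n=\tau_\epsilon(\bar x_n)$, so each local term equals
\[
\tau_\epsilon(\bar x_{n+1})-\tau_\epsilon(y(t_{n+1})) = \nabla\tau_\epsilon(\zeta_n)\cdot(\bar x_{n+1}-y(t_{n+1})),
\]
which is sensitivity times local error. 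The local error is bounded by $h_n^2 \sup\norm{b'}\,\abs{b}$ over the segment. Assumption~\ref{asmp:Rn.bndBdNextByPrev} bounds the Jacobian there by $k\norm{b'(\bar x_n)}$. Assumption~\ref{asmp:Rn.BndGradTau}, together with the monotonicity~\ref{asmp:Rn.bndNormBDirection} (so that $\abs{b(\zeta_n)}\gtrsim\abs{b(\bar x_n)}$ along the step), bounds the sensitivity by $C_1/\abs{b(\bar x_n)}$. Together with $h_n^2\norm{b'(\bar x_n)}\le\epsilon^2$, each step contributes $\mathcal{O}(\epsilon^2)$, hence
\[
\abs{\bar\tau-\tau_\epsilon}\le C N\epsilon^2+\mathcal{O}(\epsilon).
\]

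Next we bound $N$ as in~\eqref{eq:costEst1D}. We have $N\le\sum_n 1 = \sum_n h_n/h_n$. Converting to a sum in space via $\abs{\bar x_{n+1}-\bar x_n}=h_n\abs{b(\bar x_n)}$ gives
\[
N \le \epsilon^{-1}\sum_n \frac{\sqrt{\max\{\norm{b'(\bar x_n)},1\}}}{\abs{b(\bar x_{n+1})}}\,\abs{\bar x_{n+1}-\bar x_n}.
\]
Using the first bound of Assumption~\ref{asmp:Rn.bDerOverBBound}, the summand is at most $C_3\abs{\bar x_{n+1}}^{-\nu}\abs{\bar x_{n+1}-\bar x_n}$. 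By the directionality Assumption~\ref{asmp:Rn.directionalityOfB}, the increments $\abs{\bar x_{n+1}-\bar x_n}$ are comparable to $\abs{\bar x_{n+1}}-\abs{\bar x_n}$ (via the cosine bound $C_2$, or coordinatewise in case b). This turns the sum into a Riemann-type sum dominated by $\int_{\abs{x_0}}^\infty \rho^{-\nu}\,d\rho<\infty$, since $\nu>1$. Hence $N\le C\epsilon^{-1}$ plus a correction from the final overshoot step. Inserting into the error bound gives $\abs{\bar\tau-\tau_\epsilon}=\mathcal{O}(\epsilon)$ and cost $\mathcal{O}(\epsilon^{-1})$. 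If any circularity appears, as in 1D, we absorb it using $\epsilon\sqrt{\norm{b'}}$ small.

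The main obstacle is making the step size compatible with the geometry. First, we must check that $\bar x_{n+1}$ stays in $D(\epsilon)$ or that the overshoot is controlled. Second, we must check that $\abs{\bar x_{n+1}}/\abs{\bar x_n}$ stays bounded, so that the Riemann sum is comparable to the integral. For this we plan to use the second bound of Assumption~\ref{asmp:Rn.bDerOverBBound}, $\norm{b'}/\abs{b}\le C_4\abs{x}^{-\upsilon}$, combined with $\upsilon\ge 1-\alpha$ and $\abs{x}\le r(\epsilon)\sim\epsilon^{-1/\alpha}$. These give
\[
h_n\abs{b(\bar x_n)}/\abs{\bar x_n}=\epsilon\abs{b}/(\abs{x}\sqrt{\norm{b'}})\lesssim \epsilon\sqrt{\norm{b'}}\,\abs{x}^{\upsilon-1} = o(1),
\]
the analogue of the $g_{n-1}=h_{n-1}$ argument. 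The $\max\{\cdot,1\}$ in $h$ only matters on a bounded region and costs $\mathcal{O}(\epsilon^{-1})$ steps trivially.
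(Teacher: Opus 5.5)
Your overall architecture matches the paper's: split via Lemma~\ref{lemma:boundTauDiff}, telescope to get $\abs{\bar\tau-\tau_\epsilon}\lesssim N\epsilon^2$, then bound $N$ by a spatial sum using the comparability in Lemma~\ref{lemma:LwrBndDeltaNormX}. Treating the overshoot step separately is fine. However, two steps fail as written.

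\textbf{Local error.} Your local error estimate breaks once you compare with the exact flow $y$. At that point you need three things: bounds on $\norm{b'}$ along chords from $\bar x_n$ to $y(s)$, an upper bound on $\abs{b(y(s))}$, and $\abs{b(\zeta_n)}\gtrsim\abs{b(\bar x_n)}$ at a point between $\bar x_{n+1}$ and $y(t_{n+1})$. Assumptions~\ref{asmp:Rn.bndNormBDirection} and~\ref{asmp:Rn.bndBdNextByPrev} only concern points $\bar x_n+\theta h_n b(\bar x_n)$ on the forward Euler segment. So none of these is supplied by the hypotheses, and you give no argument transferring them to the exact trajectory.

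The fix is one line from what you wrote. After the mean value theorem, use the PDE~\eqref{eq:PDE_exit_time} at the intermediate point $\xi_n$ rather than at $\bar x_n$, i.e.\ $h_n=-h_n\nabla\tau_\epsilon(\xi_n)\cdot b(\xi_n)$. This gives $\tau_\epsilon(\bar x_{n+1})-\tau_\epsilon(\bar x_n)+h_n=h_n\nabla\tau_\epsilon(\xi_n)\cdot\bigl(b(\bar x_n)-b(\xi_n)\bigr)$. This needs only first derivatives of $\tau_\epsilon$, and every evaluation point lies on the Euler segment. There \ref{asmp:Rn.BndGradTau}, \ref{asmp:Rn.bndNormBDirection} and \ref{asmp:Rn.bndBdNextByPrev} give the bound $C_1k\norm{b'(\bar x_n)}h_n^2\le C_1k\epsilon^2$. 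This is exactly the paper's argument.

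\textbf{Cost bound.} Your cost bound contains a reversed inequality. From $1=h_n/h_n$ and $\abs{\bar x_{n+1}-\bar x_n}=h_n\abs{b(\bar x_n)}$ you get exactly $N=\epsilon^{-1}\sum_n\sqrt{\max\{\norm{b'(\bar x_n)},1\}}\,\abs{\bar x_{n+1}-\bar x_n}/\abs{b(\bar x_n)}$, with the left endpoint in the denominator. Assumption~\ref{asmp:Rn.bndNormBDirection} gives $\abs{b(\bar x_{n+1})}\ge\abs{b(\bar x_n)}$, so your displayed sum with $\abs{b(\bar x_{n+1})}$ is a lower bound for $N$, not an upper bound. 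Yet the first half of Assumption~\ref{asmp:Rn.bDerOverBBound} is only usable with the right endpoint.

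The paper bridges this by writing $1=\abs{b(\bar x_n)}/\abs{b(\bar x_{n+1})}+\bigl(\abs{b(\bar x_{n+1})}-\abs{b(\bar x_n)}\bigr)/\abs{b(\bar x_{n+1})}$. The first piece yields your sum. The second is bounded, via the mean value theorem and the second half of Assumption~\ref{asmp:Rn.bDerOverBBound}, by terms $\abs{\bar x}^{-\upsilon}\Delta\abs{\bar x}$, which the paper compares with $\int_{\abs{x_0}}^{r(\epsilon)}\rho^{-\upsilon}\,d\rho=\mathcal{O}(\epsilon^{-1})$. This is precisely where $\upsilon\ge\max\{1-\alpha,0\}$ and $r(\epsilon)\sim\epsilon^{-1/\alpha}$ enter.

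The use you planned for that hypothesis does not work. The bound $\norm{b'}/\abs{b}\le C_4\abs{x}^{-\upsilon}$ bounds $\abs{b}/\norm{b'}$ from below. Your estimate $\epsilon\abs{b}/(\abs{x}\sqrt{\norm{b'}})\lesssim\epsilon\sqrt{\norm{b'}}\abs{x}^{\upsilon-1}$ needs an upper bound; the 1D inequality $b\le cxb'$ came from convexity and has no analogue here. For your main sum the ratio control is also unnecessary. Since $\rho^{-\nu}$ is decreasing and you evaluate it at the right endpoint $\abs{\bar x_{n+1}}$, the sum is dominated by $\int_{\abs{x_0}}^\infty\rho^{-\nu}\,d\rho$ without any control of $\abs{\bar x_{n+1}}/\abs{\bar x_n}$.
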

\begin{proof}
We begin by considering the difference \(\abs{\bar\tau(x_0)-\tau_\epsilon(x_0)}\). Introduce the notation 
\[\tau_\epsilon(x_0, t_0) = t_0 + \underset{t \geq 0}{\min}\{x(t) \geq D(\epsilon) \mid x(0) = x_0\}.\]
We are then able to write 
\begingroup 
\allowdisplaybreaks
\begin{align}
\label{eq:boundHD}
    \abs{\bar\tau(x_0)-\tau_\epsilon(x_0)} &= \abs{\tau_\epsilon(\bar x_N, t_N)-\tau_\epsilon(x_0, t_0)} \nonumber \\
                          &\leq \sum_{n=1}^{N}\abs{\tau_\epsilon(\bar x_n, t_n) - \tau_\epsilon(\bar x_{n-1}, t_{n-1})} \nonumber\\
                          &= \sum_{n=1}^{N}\abs{\tau_\epsilon(\bar x_n, t_n) - \tau_\epsilon(\bar x_{n-1}, t_{n}) + h_{n-1}} \nonumber\\
                          &= \sum_{n=1}^{N}\bigg\vert \nabla\tau_\epsilon(\hat x_{n-1})\cdot(\bar x_n - \bar x_{n-1}) + h_{n-1} \bigg\vert \nonumber\\
                          &= \sum_{n=1}^{N}\bigg\vert\nabla\tau_\epsilon(\hat x_{n-1})\cdot b(\bar x_{n-1})h_{n-1} + h_{n-1} \nonumber \\
                          &\hspace{10mm}+\nabla\tau_\epsilon(\hat x_{n-1})\cdot b(\hat x_{n-1})h_{n-1} - \nabla\tau_\epsilon(\hat x_{n-1})\cdot b(\hat x_{n-1})h_{n-1}\bigg\vert \nonumber\\
                          &\overset{\eqref{eq:PDE_exit_time}}{=} \sum_{n=1}^{N}\abs{\nabla\tau_\epsilon(\hat x_{n-1})\cdot\left(b(\bar x_{n-1})-b(\hat x_{n-1})\right)h_{n-1}} \nonumber \\
                          &\leq \sum_{n=1}^{N}\abs{\nabla\tau_\epsilon(\hat x_{n-1})\cdot \left(b'\left(\hat{\hat{x}}_{n-1}\right)b(\bar x_{n-1})\right)h_{n-1}^2} \nonumber \\
                          \overset{\ref{asmp:Rn.bndBdNextByPrev}}&{\leq} k\sum_{n=1}^{N}\abs{\nabla\tau_\epsilon(\hat x_{n-1})}\norm{b'\left(\bar x_{n-1}\right)}\abs{b(\bar x_{n-1})}h_{n-1}^2 \nonumber\\
                          \overset{\text{\ref{asmp:Rn.BndGradTau},\ref{asmp:Rn.bndNormBDirection}}} &{\leq} C_1k\sum_{n=1}^{N}\frac{\norm{b'\left(\bar x_{n-1}\right)}\abs{b(\bar x_{n-1})}}{\abs{b(\bar x_{n-1})}}h_{n-1}^2 \\
                          &\leq C_1k\sum_{n=1}^{N}\norm{b'(\bar x_{n-1})}h_{n-1}^2\nonumber,
\end{align}
with \(\hat x_{n-1} = \bar x_{n-1} + \theta b(\bar x_{n-1})h_{n-1}\) and \(\hat{\hat{x}}_{n-1} = \bar x_{n-1} + \theta' b(\bar x_{n-1})h_{n-1}\) for some \(\theta \in (0,1)\) and \(\theta'\in (0, \theta)\). From Algorithm \ref{alg:Rn}, we have that 
    $h_{n-1} = \epsilon/\sqrt{\norm{b'(\bar x_{n-1})}}$, and inserting this 
    into the above inequality gives 
\begin{equation}
    \label{eq:error_bnd_Rn_last}
\abs{\bar\tau(x_0)-\tau_\epsilon(x_0)} \leq kC\sum_{n=1}^N\epsilon^2 = kCN\epsilon^2.
\end{equation}
\endgroup
Lemma \ref{lemma:LwrBndDeltaNormX} below implies that \(\Delta\abs{\bar x}_{n-1} \coloneqq \abs{\bar x_{n}}_1-\abs{\bar x_{n-1}}_1 \geq C\abs{b(\bar x_{n-1})}_1h_{n-1}\) for some constant \(C\in(0,1]\) and Assumption \ref{asmp:Rn.lwrAndUprBndGrowthSqrdNorm} 
implies that \(\abs{b(x)}_1 \geq C'\abs{x}_1^{1+\alpha}\) for some  \( C'>0\), where \(\abs{\cdot}_1\) denotes the 1-norm. 
Thanks to these properties, we obtain that
\begin{align*}
    N - 1 &= \sum_{n=1}^{N-1} \frac{\abs{b(\bar x_{n})}_1}{\abs{b(\bar x_{n})}_1h_{n-1}}h_{n-1} \\
    &= \sum_{n=1}^{N-1} \frac{\abs{b(\bar x_{n})}_1 + \abs{b(\bar x_{n-1})}_1 - \abs{b(\bar x_{n-1})}_1}{\abs{b(\bar x_{n})}_1h_{n-1}}h_{n-1} \\
          &\leq C\sum_{n=1}^{N-1} \left(\frac{1}{\abs{b(\bar x_{n})}_1h_{n-1}}\Delta\abs{\bar x}_{n-1} + \frac{\norm{b'(\bar x_{n})}_1}{\abs{b(\bar x_{n})}_1}\Delta\abs{\bar x}_{n-1}\right) \\
          &\leq C\sum_{n=1}^{N-1} \left(\frac{\sqrt{\norm{b'(\bar x_{n-1})}}_1}{\epsilon\abs{b(\bar x_n)}_1}\Delta\abs{\bar x}_{n-1} + \frac{\norm{b'(\bar x_{n})}_1}{\abs{b(\bar x_{n})}_1}\Delta\abs{\bar x}_{n-1}\right) \\
          \overset{\ref{asmp:Rn.bDerOverBBound}}&{\leq} C\sum_{n=1}^{N-1} \left(\epsilon^{-1}\abs{\bar x_n}_1^{-\nu}\Delta\abs{\bar x}_{n-1} + \abs{\bar x_n}_1^{-\upsilon}\Delta\abs{\bar x}_{n-1}\right) \\
          &\leq C\epsilon^{-1}\left[-\bar x^{1-\nu}\right]_{\abs{x_0}}^{r(\epsilon)} + \mathcal{O}\left(\epsilon^{-1}\right)\\
          & =\mathcal{O}\left(\epsilon^{-1}\right),
\end{align*}
where \(C\) is a positive constant that absorb other constants.
Inserting this bound for \(N\) in \eqref{eq:error_bnd_Rn_last} yields 
$\abs{\bar\tau(x_0)-\tau_{\epsilon}(x_0)} = \mathcal{O}\left(\epsilon\right)$
and 
\begin{align*}
    &\abs{\bar\tau(x_0)-\tau(x_0)} \leq \abs{\bar\tau(x_0)-\tau_\epsilon(x_0)} + \abs{\tau(x_0)-\tau_\epsilon(x_0)} 
    \stackrel{\text{Lemma \ref{lemma:boundTauDiff}}}{=} \mathcal{O}\left(\epsilon\right).
\end{align*}
\end{proof}
\begin{remark}[Alternative choice for \(h\)]
\label{rem:altHRn}
    In some situations, being able to set \(h_{n-1} = \epsilon\sqrt{\abs{b(\bar x_{n-1})}}/\sqrt{\abs{b'(\bar x_{n-1})b(\bar x_{n-1})}}\) could be beneficial if either the value of \(\sqrt{\norm{b'(x)}}\) is significantly larger than \(\sqrt{\abs{b'(x)b(x)}}\) or if the former is more expensive to compute than the latter. From the calculations leading to \eqref{eq:boundHD}, we see that this change of adaptive step length is possible provided \(\abs{b'(x+b(x)h)b(x)} \leq C\abs{b'(x)b(x)}\) holds for some \(C>0\). Additionally, one needs to adjust the first part of Assumption \ref{asmp:Rn.bDerOverBBound} accordingly for the result of Theorem \ref{thm:propertiesAlgRn} to still hold. 
\end{remark}

The next lemma is used in the proof of Theorem \ref{thm:propertiesAlgRn} above.

\begin{lemma}[Lower bound on \(\Delta \abs{\bar x}_{n-1}\)]
\label{lemma:LwrBndDeltaNormX}
If Assumption~\ref{asmp:Rn.directionalityOfB} holds, then~\\\mbox{\(\Delta \abs{\bar x}_{n-1} \coloneqq \abs{\bar x_{n}}_1 - \abs{\bar x_{n-1}}_1\geq C\abs{b(\bar x_{n-1})}_1h_{n-1}\)} for a constant \(C \in (0, 1]\).
\end{lemma}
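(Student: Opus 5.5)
The plan is to split according to which alternative of Assumption~\ref{asmp:Rn.directionalityOfB} holds. In both cases I work componentwise with the Euler update $\bar x_n = \bar x_{n-1} + b(\bar x_{n-1})h_{n-1}$. Write $x=\bar x_{n-1}$, $b=b(x)$ and $h=h_{n-1}$. The basic device is a sign vector $s\in\{-1,1\}^n$ with $s_i=\operatorname{sign}(x_i)$ whenever $x_i\neq 0$, and $s_i=\operatorname{sign}(b_i)$ otherwise. Then $\abs{x}_1=s\cdot x$, and $\abs{y}_1\ge s\cdot y$ for every $y$. Applying this with $y=x+bh$ gives the key lower bound
\[
\abs{\bar x_n}_1-\abs{\bar x_{n-1}}_1 \ge s\cdot(x+bh)-s\cdot x = h\, s\cdot b .
\]
The proof therefore reduces to showing $s\cdot b\ge C\abs{b}_1$.

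\emph{Case \ref{asmp:Rn.directionalityOfB.sign}.} Here $x_ib_i\ge 0$ for every $i$. If $x_i\neq0$, then $b_i$ has the sign of $x_i$ or vanishes, so $s_ib_i=\abs{b_i}$. If $x_i=0$, then $s_ib_i=\abs{b_i}$ by the choice of $s$. Hence $s\cdot b=\abs{b}_1$. In fact the triangle inequality is then an equality componentwise, $\abs{x_i+b_ih}=\abs{x_i}+\abs{b_i}h$, so the claim holds with $C=1$.

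\emph{Case \ref{asmp:Rn.directionalityOfB.ineq}.} Let $\hat x = x/\abs{x}$ and decompose $b=(b\cdot\hat x)\hat x + w$ with $w\perp \hat x$. The assumption gives $b\cdot\hat x\ge C_2\abs{b}$, and hence $\abs{w}\le\sqrt{1-C_2^2}\,\abs{b}$. I then estimate
\[
s\cdot b = (b\cdot\hat x)\frac{\abs{x}_1}{\abs{x}} + s\cdot w \ge C_2\abs{b} - \sqrt n\sqrt{1-C_2^2}\,\abs{b},
\]
using $\abs{x}_1\ge\abs{x}$ and $\abs{s}=\sqrt n$. Combining this with $\abs{b}\ge \abs{b}_1/\sqrt n$ gives the claim with
\[
C=\frac{C_2-\sqrt{n(1-C_2^2)}}{\sqrt n},
\]
which lies in $(0,1]$ as soon as $C_2^2>n/(n+1)$.

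\emph{Main obstacle.} The difficulty is that the constant $C_2$ in Case~\ref{asmp:Rn.directionalityOfB.ineq} is otherwise arbitrary. For small $C_2$ the $1$-norm increment genuinely need not be controlled. For example, take $x=(1,\eta)$ and $b=(1,-1)$ with $h<\eta$: then $\abs{x+bh}_1-\abs{x}_1=0$, while $\abs{b}_1h=2h$. So for general $C_2\in(0,1]$ the argument above is the best route. The lemma in its stated form should then be read under the quantitative restriction $C_2^2>n/(n+1)$ (or under Case~\ref{asmp:Rn.directionalityOfB.sign}). I would state this explicitly in the proof.

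If one wishes to keep arbitrary $C_2$, the fallback is the Euclidean estimate. Expanding the square and using the assumption gives
\[
\abs{x+bh}^2\ge \abs{x}^2+2C_2\abs{b}\abs{x}h+\abs{b}^2h^2\ge(\abs{x}+C_2\abs{b}h)^2,
\]
so $\abs{\bar x_n}-\abs{\bar x_{n-1}}\ge C_2\abs{b}h$. This would then be fed into the proof of Theorem~\ref{thm:propertiesAlgRn} in place of the $1$-norm bound.
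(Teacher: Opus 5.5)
Your proposal is correct, and in case \ref{asmp:Rn.directionalityOfB.ineq} it is more careful than the paper. In case \ref{asmp:Rn.directionalityOfB.sign} your argument is the paper's: componentwise equality in the triangle inequality, giving $C=1$.

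In case \ref{asmp:Rn.directionalityOfB.ineq} the paper projects $\bar x_n$ onto $\bar x_{n-1}/\abs{\bar x_{n-1}}$. This gives exactly your Euclidean fallback $\abs{\bar x_n}\ge\abs{\bar x_{n-1}}+C_2\abs{b(\bar x_{n-1})}h_{n-1}$. The paper then asserts that the $1$-norm bound follows ``by equivalence of norms''. That step is invalid: norm equivalence compares sizes, not increments. Your example shows the pointwise implication fails, and with $b=(1,-1-\delta)$ the $1$-norm even decreases.

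Nor is this an artefact of one point. Take $b(x)=\abs{x}^{\alpha}Rx$ with $R$ the rotation by $-\pi/4$. It satisfies \ref{asmp:Rn.lwrAndUprBndGrowthSqrdNorm} and \ref{asmp:Rn.directionalityOfB.ineq} with $C_2=1/\sqrt2$. At a point with polar angle $\theta\in(0,\pi/4)$, a step that does not cross the $x_1$-axis gives $\Delta\abs{\bar x}_{n-1}=\tan\theta\,\abs{b(\bar x_{n-1})}_1h_{n-1}$. Along the flow the polar angle decreases by $\log(\abs{x}/\abs{x_0})$, and the angular steps of Algorithm~\ref{alg:Rn} are $\mathcal{O}(\epsilon^{1/2})$. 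Hence the iterates come arbitrarily close to the axes, and no $C$ independent of $\epsilon$ exists. So the lemma as stated is false in case \ref{asmp:Rn.directionalityOfB.ineq}, and making a restriction explicit, as you do, is necessary.

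On what each version buys. Your sign-vector argument gives a genuine $1$-norm estimate, and its threshold can be sharpened from $n/(n+1)$ to $(n-1)/n$. Since $w\perp\hat x$, only the component $s_\perp$ of $s$ orthogonal to $\hat x$ enters, with $\abs{s_\perp}^2=n-\rho^2$ for $\rho=\abs{x}_1/\abs{x}\ge1$. This gives $s\cdot b\ge\bigl(C_2\rho-\sqrt{(n-\rho^2)(1-C_2^2)}\bigr)\abs{b}$. The right-hand side is increasing in $\rho$, hence at least $\bigl(C_2-\sqrt{(n-1)(1-C_2^2)}\bigr)\abs{b}$. Your own example, whose cosine lies just below $1/\sqrt2$ in $\R^2$, shows this threshold is sharp.

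Your Euclidean fallback is the right repair of Theorem~\ref{thm:propertiesAlgRn}. There the increment bound is used only in telescoping sums such as $\sum_n\abs{\bar x_n}^{-\nu}\Delta\abs{\bar x}_{n-1}$, which can be run in whichever norm the increment bound holds. Assumption~\ref{asmp:Rn.bDerOverBBound} is already stated with Euclidean norms. So one uses $\abs{\cdot}$ under \ref{asmp:Rn.directionalityOfB.ineq} and $\abs{\cdot}_1$ under \ref{asmp:Rn.directionalityOfB.sign}, where the $1$-norm makes the bound exact. The multidimensional examples of Sections~\ref{sec:numExp_UncoupledProblem}--\ref{sec:numExp_slowGrowthMultidim} all satisfy \ref{asmp:Rn.directionalityOfB.sign}, so they are unaffected.
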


\begin{proof}
    Let us first consider the case where Assumption \ref{asmp:Rn.directionalityOfB.ineq} holds. We then have that
    \begin{align*}
        \abs{\bar x_{n}} &= \abs{\bar x_{n-1} + b(\bar x_{n-1})h_{n-1}} \\
        &\geq \abs{(\bar x_{n-1} + b(\bar x_{n-1})h_{n-1})\cdot\frac{\bar x_{n-1}}{\abs{\bar x_{n-1}}^2}\bar x_{n-1}}\\
        &\geq \abs{\left(1 + \frac{b(\bar x_{n-1})\cdot \bar x_{n-1}}{\abs{\bar x_{n-1}}^2}h_{n-1}\right)\bar x_{n-1}} \geq \abs{\bar x_{n-1}} + C_2\abs{b(\bar x_{n-1})}h_{n-1},
    \end{align*}
    and by equivalence of norms this implies our result. Assume now that Assumption \ref{asmp:Rn.directionalityOfB.sign} holds. Then the claim holds as
    \[\abs{\bar x_n}_1 - \abs{\bar x_{n-1}}_1 = \abs{\bar x_{n-1} + b(\bar x_{n-1})h_{n-1}}_1 - \abs{\bar x_{n-1}}_1 = \abs{b(\bar x_{n-1})}_1 h_{n-1}\]
\end{proof}

Some points in Assumption~\ref{asmp:Rn} can be difficult to verify for a given vector 
field. 
For example, verifying Assumption~\ref{asmp:Rn.BndGradTau} analytically requires knowledge of the form of the gradient \(\nabla\tau_\epsilon(x_0)\) which often will not be available. Lemma~\ref{lemma:SuffCondAsmpGradTau} introduces a sufficient condition for Assumption~\ref{asmp:Rn.BndGradTau} that depends on bounding the first variation of \(x\). We let \(J\) denote the first variation of \(x\), that is
    \[J(t, x_0) \coloneq D_{x_0} x(t, x_0)\]
    with \(D_{x_0}\) denoting the Jacobian with respect to \(x_0\)
    
\begin{lemma}
\label{lemma:SuffCondAsmpGradTau}
    Let Assumption \ref{asmp:Rn}, with the exception of Assumption \ref{asmp:Rn.BndGradTau}, hold and assume there exist \(C > 0\) such that for any $\epsilon>0$ and all $x_0\in D(\epsilon)$, 
    \[
    \norm{J(\tau_\epsilon(x_0), x_0)} \leq C\frac{\abs{b(x(\tau_\epsilon(x_0), x_0))}}{\abs{b(x_0)}}. 
    \]
    Then Assumption \ref{asmp:Rn.BndGradTau} holds as well. 
\end{lemma}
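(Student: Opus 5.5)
We want to show $\abs{\nabla\tau_\epsilon(x_0)}\le C_1/\abs{b(x_0)}$ for all $x_0\in D(\epsilon)$. The plan is to differentiate the defining identity of the exit time with respect to the initial condition. By definition of $\tau_\epsilon$ we have
\[
\abs{x(\tau_\epsilon(x_0), x_0)}^2 = r(\epsilon)^2 \quad \text{for all } x_0 \in D(\epsilon).
\]
Since $\tau_\epsilon\in C^1(\overline{D(\epsilon)})$ (as noted after \eqref{eq:PDE_exit_time}) and the flow is $C^1$ in $(t,x_0)$ because $b\in C^1$, we may differentiate this identity in $x_0$ by the chain rule. Writing $y = x(\tau_\epsilon(x_0),x_0)$ for the exit point, this gives
\[
y^\top\left( b(y)\,\nabla\tau_\epsilon(x_0)^\top + J(\tau_\epsilon(x_0),x_0)\right) = 0 .
\]
Solving for the gradient, we obtain
\[
\nabla\tau_\epsilon(x_0) = -\frac{J(\tau_\epsilon(x_0),x_0)^\top y}{y\cdot b(y)} .
\]
The denominator is strictly positive by Assumption \ref{asmp:Rn.lwrAndUprBndGrowthSqrdNorm}. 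This is the standard implicit-function (transversality) argument at the exit time.

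Next we bound the gradient. Using the hypothesis on $\norm{J}$,
\[
\abs{\nabla\tau_\epsilon(x_0)} \le \frac{\norm{J}\,\abs{y}}{y\cdot b(y)} \le C\frac{\abs{b(y)}\abs{y}}{\abs{b(x_0)}\; y\cdot b(y)}.
\]
It therefore suffices to have $\abs{b(y)}\abs{y}\le C' \, y\cdot b(y)$. Under Assumption \ref{asmp:Rn.directionalityOfB.ineq} this holds directly with $C'=1/C_2$, which gives $C_1 = C/C_2$.

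The main obstacle is the alternative Assumption \ref{asmp:Rn.directionalityOfB.sign}, which only says $x_ib_i\ge 0$ and does not by itself give a cone condition. Since $y_i b_i(y) = \abs{y_i}\abs{b_i(y)}$, we have $y\cdot b(y) = \sum_i\abs{y_i}\abs{b_i(y)}$, and this can be small compared with $\abs{y}\abs{b(y)}$. To handle this case I would replace the Euclidean sphere by the geometry the algorithm effectively uses. The first thing to try is the $1$-norm: work with the level set $\abs{x}_1 = $ const, where differentiating gives the denominator $\mathrm{sgn}(y)\cdot b(y) = \abs{b(y)}_1$. This yields $\abs{\nabla\tau}\lesssim \norm{J}\sqrt n/\abs{b(y)}_1\le C''/\abs{b(x_0)}$ by norm equivalence. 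If the exit set must stay Euclidean, I would instead try to combine Assumption \ref{asmp:Rn.lwrAndUprBndGrowthSqrdNorm}, namely $y\cdot b(y)\ge \check C\abs{y}^{2+\alpha}$, with a growth upper bound on $\abs{b(y)}$ derived from Assumption \ref{asmp:Rn.bDerOverBBound}. I expect this step to need extra care and possibly an additional mild assumption.

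Finally, the constant obtained is uniform in $\epsilon$ and $x_0$, since all bounds used are. This gives Assumption \ref{asmp:Rn.BndGradTau}.
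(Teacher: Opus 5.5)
Your argument for the case of Assumption~\ref{asmp:Rn.directionalityOfB.ineq} is the paper's proof. The paper differentiates $\phi(x(\tau_\epsilon(x_0),x_0))=0$ with $\phi(x)=\abs{x}^2-r(\epsilon)^2$ and obtains the same formula \eqref{eq:gradTauDir}. It then bounds $\abs{J^\top y}\le\norm{J}\abs{y}$ and uses the cone condition to get $C_1=C/C_2$. The paper's proof also invokes only Assumption~\ref{asmp:Rn.directionalityOfB.ineq} and never treats Assumption~\ref{asmp:Rn.directionalityOfB.sign}. So the obstacle you flag is a gap in the paper's own argument, not something you missed.

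Neither of your suggested repairs closes that gap, however.

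\textbf{The $1$-norm route.} It bounds the gradient of the exit time from a $1$-norm ball. That is a different function from the Euclidean $\tau_\epsilon$ in Assumption~\ref{asmp:Rn.BndGradTau}, which also drives the stopping rule of Algorithm~\ref{alg:Rn}. In addition, the hypothesis on $\norm{J}$ is stated at the Euclidean exit time, not the new one. Finally, $\abs{x}_1$ is not differentiable at exit points with a vanishing coordinate, and Assumption~\ref{asmp:Rn.directionalityOfB.sign} permits such points.

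\textbf{The growth-bound route.} A bound $\abs{b(y)}\le\hat C\abs{y}^{1+\beta}$ only gives $\abs{y}\abs{b(y)}/(y\cdot b(y))\le(\hat C/\check C)\abs{y}^{\beta-\alpha}$. This is uniform in $r(\epsilon)$ only if $\beta\le\alpha$. But such a bound together with Assumption~\ref{asmp:Rn.lwrAndUprBndGrowthSqrdNorm} already implies the cone condition, so you would just be reassuming Assumption~\ref{asmp:Rn.directionalityOfB.ineq}. Moreover, Assumption~\ref{asmp:Rn.bDerOverBBound} only controls $\norm{b'}$ relative to $\abs{b}$, and gives no growth bound on $\abs{b}$ with exponent matching $\alpha$.

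The lossy step is $\abs{J^\top y}\le\norm{J}\abs{y}$. With only a norm bound on $J$ and no cone condition, you would need structural information about $J$. An example is the uncoupled problem of Section~\ref{sec:numExp_UncoupledProblem}: there $J$ is diagonal, so $\partial_i\tau_\epsilon$ carries the factor $y_ib_i(y)/(y\cdot b(y))\in[0,1]$. In effect the lemma should be read as requiring Assumption~\ref{asmp:Rn.directionalityOfB.ineq}, which is the only setting in which it is applied in Section~\ref{sec:numExp}.
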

\begin{proof}
    The domain $D(\epsilon) \subset B(r(\epsilon))$, and since $|x(t)|$ is strictly increasing, we recall that the solution will hit the sphere $\partial B(r(\epsilon))$ at 
    $t = \tau_\epsilon(x_0)$. The function \(\phi(x) = \abs{x}^2-r(\epsilon)^2\) 
    is equal to zero on $\partial B(r(\epsilon))$. Writing, 
    $f(x_0) = \phi(x(\tau_\epsilon(x_0), x_0))$, we thus obtain 
    \[
    \frac{\partial}{\partial x_0^i}f(x) = 
    0 \qquad i =1,\ldots, n,
    \]
    for any $x_0 = (x_0^1,\ldots, x_0^n) \in D(\epsilon)$. Expanding the left side of the above equation yields 
    \[\nabla f(x_0)^T\left( b(x(\tau_\epsilon(x_0), x_0)) \partial _{x_0^i}\tau_\epsilon(x_0) + x_{x_0^i}(\tau_\epsilon(x_0), x_0)\right) = 0, \]
    where \(x_{x_0^i}(\tau_\epsilon(x_0), x_0)\) is the \(i\)-th column of \(J(\tau_\epsilon(x_0), x_0)\)
    and thus
    \begin{equation}
    \label{eq:gradTauDir}
        \nabla\tau_\epsilon(x_0) = -\frac{x(\tau_\epsilon(x_0), x_0)^T J(\tau_\epsilon(x_0), x_0)}{x(\tau_\epsilon(x_0), x_0)^T  b(x(\tau_\epsilon(x_0), x_0))}.
    \end{equation}
    Hence 
    \begin{equation*}
    \abs{\nabla\tau_\epsilon(x_0)} \stackrel{\ref{asmp:Rn.directionalityOfB.ineq}}{\leq} \frac{\norm{J(\tau_\epsilon(x_0), x_0)}}    {C_2\abs{b(x(\tau_\epsilon(x_0), x_0))}} \leq \frac{C\abs{b(x(\tau_\epsilon(x_0), x_0))}}{C_2\abs{b(x(\tau_\epsilon(x_0), x_0))}\abs{b(x_0)}} = \frac{C_1}{\abs{b(x_0)}},
    \end{equation*}
    where \(C_1 = \frac{C}{C_2}\) is some positive constant. 
\end{proof}

Often an expression for \(J\) will not be readily available, possibly making the bound needed in Lemma \ref{lemma:SuffCondAsmpGradTau} hard to find.
An alternative is then to use the sufficient condition given in Remark \ref{rem:suffCondAsmpFirstVar} related to quadratic forms of \(b'(x)\).

\begin{remark}
   \label{rem:suffCondAsmpFirstVar}
   Let \(v_{k}(t)\) denote the \(k\)-th column of the first variation \(J(t,x_0)\).
   Then \(\dot{v_k} = b'(x)v_k\) with \(v_k(0) = e_k\) as \(J(0,x_0) = I\). 
   A sufficient condition is that there exist some constant \(C > 0\) such that
   \begin{equation}
          \label{eq:suffCondAssmpFirstVar}
          \underset{v\in \bar B\left(\abs{b(x)}\right)}{\sup} v^Tb'(x)v = \frac{1}{2}\lambda_{\max}\left(b'(x)+b'(x)^T\right)\abs{b(x)}^2\leq C^2b(x)^Tb'(x)b(x).
   \end{equation}
   In addition to \(C\abs{b(x_0)}\geq 1\) and \(b(x_0)b'(x_0)b(x_0) - \frac{\lambda_{\max}(b'(x_0)+b'(x_0)^T)}{2C^2}\geq 0\). 
   This is the case as then 
   \[\frac{d}{dt}\left(\frac{\abs{b(x(t))}^2}{2} - \frac{\abs{v_k(t)}^2}{2C^2}\right) = b(x(t))b'(x(t))b(x(t)) - \frac{v_k^T(t)b'(x(t))v_k^T(t)}{C^2} \geq 0,\] 
   implying that \(\abs{v_k(t)} \leq C\frac{\abs{b(x(t))}}{\abs{b(x_0)}}\), as needed. 
\end{remark}

Lastly, we also provide a convenient, easy-to-check, sufficient condition for Assumption \ref{asmp:Rn.bDerOverBBound}.

\begin{lemma}
\label{lemma:suffCondAsmpbDerOverBBound}
Assume that Assumption \ref{asmp:Rn.lwrAndUprBndGrowthSqrdNorm} holds and that there exist constants \(C>0\) and \(0<\gamma <\min\{2\alpha, 1+\alpha\}\) such that \(\norm{b'(x)}\leq C_3\abs{x}^{\gamma}\) for all \(x \in D\). Then Assumption \ref{asmp:Rn.bDerOverBBound} holds with \(\nu = -\frac{\gamma-2(1+\alpha)}{2}\) and \(\upsilon = -(\gamma-\alpha-1)\).
\end{lemma}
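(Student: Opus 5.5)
We verify the two inequalities of Assumption~\ref{asmp:Rn.bDerOverBBound} separately. The only tools are the lower bound $|b(x)| \geq \check C |x|^{1+\alpha}$ and the hypothesis $\norm{b'(x)} \leq C_3|x|^\gamma$. The lower bound follows from Assumption~\ref{asmp:Rn.lwrAndUprBndGrowthSqrdNorm} and Cauchy--Schwarz, since $\check C|x|^{2+\alpha} \leq b(x)\cdot x \leq |b(x)||x|$.

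\emph{Second inequality.} For $x \in D$ we directly get
\[
\frac{\norm{b'(x)}}{|b(x)|} \leq \frac{C_3}{\check C}\,|x|^{\gamma-1-\alpha} = \frac{C_4}{|x|^{\upsilon}}
\]
with $\upsilon = 1+\alpha-\gamma$. The condition $\gamma < 1+\alpha$ gives $\upsilon > 0$. To see that $\upsilon \geq 1-\alpha$, note that this is equivalent to $\gamma \leq 2\alpha$, which holds by assumption. Hence $\upsilon \geq \max\{1-\alpha,0\}$.

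\emph{First inequality.} Set $y = x + b(x)h$. The step $h$ is a fixed small step and is not tied to $\epsilon$. The numerator involves $\norm{b'(x)}$ evaluated at $x$, while the denominator is $|b(y)|$, so the two are evaluated at different points and must be compared. I first use
\[
|b(y)| \geq \check C |y|^{1+\alpha},
\]
which requires $y \in D$. This holds because $|y| \geq |x|$ by the monotonicity remark, which uses Assumption~\ref{asmp:Rn.directionalityOfB}. If that assumption is not available, I would instead expand $|y|^2 = |x|^2 + 2h\,b(x)\cdot x + h^2|b(x)|^2 \geq |x|^2$, using $b(x)\cdot x > 0$ from Assumption~\ref{asmp:Rn.lwrAndUprBndGrowthSqrdNorm}. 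Next, $\sqrt{\norm{b'(x)}} \leq \sqrt{C_3}\,|x|^{\gamma/2} \leq \sqrt{C_3}\,|y|^{\gamma/2}$, again because $|x| \leq |y|$ and $\gamma > 0$. Combining the two bounds,
\[
\frac{\sqrt{\norm{b'(x)}}}{|b(y)|} \leq \frac{\sqrt{C_3}}{\check C}\,|y|^{\gamma/2-1-\alpha} = \frac{C}{|y|^{\nu}},
\]
with $\nu = 1+\alpha-\gamma/2 = -\tfrac{\gamma-2(1+\alpha)}{2}$. Finally, $\nu > 1$ is equivalent to $\gamma < 2\alpha$, which is given.

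The step I expect to be the main (though mild) obstacle is justifying $|x| \leq |x+b(x)h|$, which is what allows the numerator at $x$ to be transferred to the point $y$. The expansion of $|y|^2$ above settles it using only Assumption~\ref{asmp:Rn.lwrAndUprBndGrowthSqrdNorm}, and it holds for every $h>0$. The constants are then taken as $C_3' = \sqrt{C_3}/\check C$ and $C_4 = C_3/\check C$. The statement's reuse of the symbol $C_3$ for both the hypothesis constant and the resulting constant is only a notational overlap and does not affect the argument.
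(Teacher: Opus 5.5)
Your proposal is correct and follows the same route as the paper. You bound $\sqrt{\norm{b'(x)}}$ by a power of $\abs{x+b(x)h}^{\gamma/2}$, bound $\abs{b(\cdot)}$ from below by a multiple of $\abs{\cdot}^{1+\alpha}$ via Assumption~\ref{asmp:Rn.lwrAndUprBndGrowthSqrdNorm}, and read off $\nu>1$ and $\upsilon\geq\max\{1-\alpha,0\}$ from $\gamma<\min\{2\alpha,1+\alpha\}$. Your version is more careful than the paper's: the paper silently uses $\abs{x}\leq\abs{x+b(x)h}$ and drops the constants, whereas you justify the former from $b(x)\cdot x>0$ and keep the constants $\sqrt{C_3}/\check C$ and $C_3/\check C$ explicit.
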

\begin{proof}
    We have that
    \[\frac{\sqrt{\norm{b'(x)}}}{\abs{b(x+b(x)h)}} \leq \frac{\sqrt{\abs{x+b(x)h}^{\gamma}}}{\abs{b(x+b(x)h)}} \leq \abs{x+b(x)h}^{\frac{\gamma-2(1+\alpha)}{2}},\]
    and as \(\gamma < 2\alpha\) we get \(\nu=-\frac{\gamma-2(1+\alpha)}{2}>1\). Furthermore, \(\frac{\norm{b'(x)}}{\abs{b(x)}} \leq \abs{x}^{\gamma-\alpha-1}\), and from the assumed restriction on \(\gamma\), we have \(\upsilon = -(\gamma-\alpha-1) \geq\max\{1-\alpha,0\}\).
\end{proof}

\subsection{Dynamics with slower growth}
\label{sec:RnLogDyn}

The theory above does not cover cases where \(\abs{b(x)}\) grows slower than \(\abs{x}^{1+\alpha}\) for \(\alpha >0\). Let us therefore consider an alternative to Assumption \ref{asmp:Rn.lwrAndUprBndGrowthSqrdNorm}, allowing for slower growing functions, such as \(\abs{b(x)} = \abs{x}\log(\abs{x})^{2}\). 

\begingroup
\renewcommand{\theassumption}{B\textsuperscript{\ensuremath\prime}}
\begin{assumption}
    \label{asmp:RnAlt}
    Consider the ODE~\eqref{eq:systemODE} with $D = \{x \in \R^n \mid |x| > \delta\}$ for a \(\delta > 1\). Let Assumption \ref{asmp:Rn} hold, with the possible exception of points \ref{asmp:Rn.lwrAndUprBndGrowthSqrdNorm} and \ref{asmp:Rn.bDerOverBBound}.
    And let the function \(b\in C^1(D)\) satisfy the following additional conditions:
    \begin{enumerate}[label=\arabic*)]
        \item There exists \(\check{C}, \alpha > 0\) such that \(\check{C}\abs{x}^2\log(\abs{x})^{1+\alpha} \leq b(x)\cdot x\) for all \(x \in D\). \\
        \stepcounter{enumi}
        \stepcounter{enumi}
        \item We can find constants \(C_3, \gamma >0\) such that \(\norm{b'(x)} \leq C_3\log(\abs{x})^{\gamma}\) for all \(x \in D\).
    \end{enumerate}
\end{assumption}
\addtocounter{assumption}{-1}
\endgroup

We can then find an analogue of Lemma \ref{lemma:boundTauDiff} under Assumption \ref{asmp:RnAlt}.
\begin{corr}
    \label{corr:boundTauDiffAlt}
    Let \(r(\epsilon) = e^{\left(\frac{1}{\check{C}\alpha\epsilon}\right)^{\frac{1}{\alpha}}}\) for \(\epsilon > 0\).
    Under Assumption \ref{asmp:RnAlt} it then holds that
    \[\abs{\tau(x_0)-\tau_\epsilon(x_0)} \leq \epsilon.\]
\end{corr}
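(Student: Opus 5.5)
The plan is to mirror the proof of Lemma~\ref{lemma:boundTauDiff}, replacing the power-law growth bound by the logarithmic one from Assumption~\ref{asmp:RnAlt}. Along the exact trajectory $x(t)$ with $x(0)=x_0\in D$, the lower bound $\check C\abs{x}^2\log(\abs{x})^{1+\alpha}\le b(x)\cdot x$ gives
\[
\frac{d\abs{x(t)}^2}{dt} = 2\,b(x(t))\cdot x(t) \ge 2\check C\abs{x(t)}^2\log(\abs{x(t)})^{1+\alpha}.
\]
Since $\delta>1$, the logarithm is positive on $D$, so $\abs{x(t)}$ is strictly increasing. Hence, after $\tau_\epsilon(x_0)$, the radius $\abs{x(t)}$ increases monotonically from $r(\epsilon)$ to $\infty$ on $[\tau_\epsilon(x_0),\tau(x_0))$. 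If instead $\abs{x_0}\ge r(\epsilon)$, then $\tau_\epsilon(x_0)=0$, and the same argument applies with lower limit $\abs{x_0}\ge r(\epsilon)$, which only makes the bound smaller.

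Next I would use the radius as the integration variable. Set $\rho=\abs{x}$ and $s=\log\rho$. Since $d\rho^2/dt = 2\rho\,\dot\rho$, the inequality above gives $\dot\rho\ge\check C\rho\,(\log\rho)^{1+\alpha}$, that is, $\dot s\ge\check C s^{1+\alpha}$. The substitution $dt\le ds/(\check C s^{1+\alpha})$ is legitimate because $s(t)$ is strictly increasing and $C^1$. Therefore
\[
\tau(x_0)-\tau_\epsilon(x_0)=\int_{\tau_\epsilon(x_0)}^{\tau(x_0)}dt \le \frac{1}{\check C}\int_{\log r(\epsilon)}^{\infty}\frac{ds}{s^{1+\alpha}} = \frac{1}{\check C\alpha(\log r(\epsilon))^{\alpha}}.
\]
With $r(\epsilon)=e^{(1/(\check C\alpha\epsilon))^{1/\alpha}}$ we have $(\log r(\epsilon))^\alpha = 1/(\check C\alpha\epsilon)$, so the right-hand side equals exactly $\epsilon$. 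Finally, $\tau(x_0)\ge\tau_\epsilon(x_0)$ because $\tau_\epsilon$ is a hitting time occurring before blow-up, so the absolute value is harmless.

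The only step needing care is justifying the change of variables, in particular that $\tau(x_0)$ is finite and that the integral runs over $s\in[\log r(\epsilon),\infty)$. Finiteness of blow-up follows from the same computation, taken from $\log\abs{x_0}>0$ instead, since $\int^\infty s^{-1-\alpha}\,ds<\infty$. I would state this explicitly, noting that $\delta>1$ guarantees $\log\abs{x}>0$ so that the bound is nondegenerate. Everything else is routine.
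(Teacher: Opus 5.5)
Your proposal is correct and follows essentially the same argument as the paper: integrate the differential inequality $\frac{d}{dt}\abs{x(t)}^2 \ge 2\check C\abs{x(t)}^2\log(\abs{x(t)})^{1+\alpha}$ from $\tau_\epsilon(x_0)$ to $\tau(x_0)$ by a change of variables, then insert the value of $r(\epsilon)$. The difference is cosmetic: the paper substitutes $y=\abs{x}^2$, which produces the factor $2^{\alpha}/\check C$ in front of $\int_{r(\epsilon)^2}^\infty \frac{dy}{y\log(y)^{1+\alpha}}$, whereas your substitution $s=\log\abs{x}$ gives $\dot s\ge \check C s^{1+\alpha}$ directly and avoids that bookkeeping; you also make explicit the monotonicity of $\abs{x(t)}$, the case $\abs{x_0}\ge r(\epsilon)$, and the finiteness of $\tau(x_0)$, which the paper leaves implicit.
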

\begin{proof}
    We have \(\frac{d\abs{x(t)}^2}{dt} = 2b(x(t))\cdot x(t)\). Assumption \ref{asmp:RnAlt} then gives \(2\check{C}\abs{x(t)}^2\log(\abs{x(t)})^{1+\alpha} \leq \frac{d\abs{x(t)}^2}{dt}\). Now, setting \(y = \abs{x}^2\) gives
    \[\abs{\tau(x_0)-\tau_\epsilon(x_0)} = \int^{\tau(x_0)}_{\tau_\epsilon(x_0)}\,dt \leq \frac{2^{\alpha}}{\check{C}}\int^\infty_{r(\epsilon)^2}\frac{1}{y\log(y)^{1+\alpha}}\,dy.\]
    Inserting the assumed value for \(r(\epsilon)\) yields
    \[\abs{\tau(x_0)-\tau_\epsilon(x_0)} \leq \frac{1}{\check{C}\alpha}\frac{1}{\log(r(\epsilon))^\alpha} = \epsilon.\]
\end{proof}
Finally, we can prove a result analogue to Theorem \ref{thm:propertiesAlgRn} for this setting. 
\begin{corr}
    \label{corr:propertiesAlgRnLog}
    Let Assumption \ref{asmp:RnAlt} hold with parameters such that \(\gamma \geq 1 + \alpha\). Moreover, let \(\bar\tau\) be calculated using Algorithm \ref{alg:Rn} with \(h_{n-1} = \sqrt{\frac{\epsilon }{N\max\{1, b'(\bar x_{n-1})\}}}\) and \(r(\epsilon)=e^{\left(\frac{1}{\check{C}\alpha\epsilon}\right)^{\frac{1}{\alpha}}}\). Then, for sufficiently small \(\epsilon\), it holds that
    \[\abs{\bar\tau(x_0) - \tau(x_0)} = \mathcal{O}(\epsilon)\]
    for all \(x_0 \in D\) with a computational cost of 
    \[Cost =  
    \begin{cases}
        \mathcal{O}(\epsilon^{-1}\log(\epsilon^{-1})^2) & \text{if } \frac{\gamma}{2} = \alpha \\
        \mathcal{O}(\epsilon^{1-\frac{\gamma}{\alpha}} + \epsilon^{-1}) & \text{otherwise}. \\
    \end{cases}\]
\end{corr}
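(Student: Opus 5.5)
We follow the proof of Theorem~\ref{thm:propertiesAlgRn} and split
\[
\abs{\bar\tau(x_0)-\tau(x_0)} \le \abs{\bar\tau(x_0)-\tau_\epsilon(x_0)} + \abs{\tau_\epsilon(x_0)-\tau(x_0)}.
\]
The second term is at most \(\epsilon\) by Corollary~\ref{corr:boundTauDiffAlt}. For the first term, the chain of inequalities leading to \eqref{eq:boundHD} uses only Assumptions~\ref{asmp:Rn.BndGradTau}, \ref{asmp:Rn.bndNormBDirection} and \ref{asmp:Rn.bndBdNextByPrev}. All of these are retained in Assumption~\ref{asmp:RnAlt}, so
\[
\abs{\bar\tau-\tau_\epsilon}\le C_1k\sum_{n=1}^N \norm{b'(\bar x_{n-1})}h_{n-1}^2 .
\]
Assumption~\ref{asmp:Rn.bndBdNextByPrev} is stated for steps \(\epsilon/\sqrt{\norm{b'}}\), so we must check it still applies. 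The new step \(h_{n-1}=\sqrt{\epsilon/(N\max\{1,\norm{b'}\})}\) is no larger than \(\epsilon/\sqrt{\norm{b'}}\) once \(N\ge \epsilon^{-1}\), which holds for small \(\epsilon\) by the cost bound below. With the new step, each summand is at most \(\epsilon/N\), hence \(\abs{\bar\tau-\tau_\epsilon}\le C_1k\,\epsilon\) directly. We no longer need a bound of the form \(N\epsilon^2\); this is exactly why \(h\) is scaled by \(N\). Since \(N\) appears in its own step size, we read the algorithm as using a target count \(N\) fixed in advance, or equivalently we require consistency: the realized number of steps must not exceed the \(N\) used in \(h\). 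The cost estimate below provides this \(N\).

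For the cost, we mimic the counting argument of Theorem~\ref{thm:propertiesAlgRn} with Lemma~\ref{lemma:LwrBndDeltaNormX}:
\[
N \lesssim \sum_n \frac{\Delta\abs{\bar x}_{n-1}}{\abs{b(\bar x_n)}h_{n-1}}
\approx \int_{\abs{x_0}}^{r(\epsilon)} \frac{\sqrt{N\,\norm{b'(x)}}}{\sqrt{\epsilon}\,\abs{b(x)}}\,d\abs{x},
\]
plus a term of the form \(\sum \frac{\norm{b'}}{\abs{b}}\Delta\abs{\bar x}\), which is harmless because \(\norm{b'}/\abs{b}\lesssim \log^{\gamma-1-\alpha}(\abs{x})/\abs{x}\). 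The growth condition gives \(\abs{b(x)}\ge \check C\abs{x}\log(\abs{x})^{1+\alpha}\), and we use \(\norm{b'}\le C_3\log^{\gamma}\). Substituting \(s=\log\abs{x}\), the integral becomes
\[
\sqrt{N/\epsilon}\int_{\log\abs{x_0}}^{L} s^{\gamma/2-1-\alpha}\,ds, \qquad L=\log r(\epsilon)=(\check C\alpha\epsilon)^{-1/\alpha}.
\]
This integral is of order \(\log L\asymp \log(\epsilon^{-1})\) when \(\gamma/2=\alpha\), and of order \(L^{\gamma/2-\alpha}+O(1)=O(\epsilon^{1/2-\gamma/(2\alpha)}+1)\) otherwise. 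The self-consistent relation \(N\lesssim \sqrt{N/\epsilon}\,I(\epsilon)\) then gives \(N\lesssim I(\epsilon)^2/\epsilon\). In the critical case this yields \(\epsilon^{-1}\log(\epsilon^{-1})^2\). Otherwise it yields \(\epsilon^{-1}(\epsilon^{1-\gamma/\alpha}+1)\). I will have to check the exponent bookkeeping against the stated \(\epsilon^{1-\gamma/\alpha}\) and sort out the normalization. The hypothesis \(\gamma\ge1+\alpha\) keeps the correction sum \(\sum\norm{b'}/\abs{b}\,\Delta\abs{\bar x}\) dominated by the main term, since it is \(\lesssim L^{\gamma-\alpha}\). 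It also ensures that \(\norm{b'}\) is large enough for the \(\max\{1,\cdot\}\) to be inactive asymptotically.

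The main obstacle is the circular dependence of \(h\) on \(N\) together with the Riemann-sum step. In the counting argument, the factor \(\sqrt{\norm{b'(\bar x_{n-1})}}/\abs{b(\bar x_n)}\) must be bounded by a function of \(\abs{\bar x_n}\). In Theorem~\ref{thm:propertiesAlgRn} this was done by the first half of Assumption~\ref{asmp:Rn.bDerOverBBound}. Here I would instead use Assumption~\ref{asmp:Rn.bndNormBDirection} and the polylogarithmic bound on \(b'\), noting that \(\log\abs{\bar x_n}\) and \(\log\abs{\bar x_{n-1}}\) are comparable because the steps are small. I would first try to close the circularity by a fixed-point argument: define \(N^\ast\) as the predicted bound, show that running the scheme with \(N=N^\ast\) terminates in at most \(N^\ast\) steps, and conclude both estimates.
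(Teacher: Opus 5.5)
Your route matches the paper's. You bound the indicator sum from \eqref{eq:boundHD} with each term at most \(\epsilon/N\). You then use the counting argument via Lemma~\ref{lemma:LwrBndDeltaNormX} and the substitution \(s=\log\abs{x}\), and solve the self-consistent inequality \(N\lesssim\sqrt{N/\epsilon}\,I(\epsilon)+K(\epsilon)\), where \(K(\epsilon)\lesssim\int^{L}s^{\gamma-1-\alpha}\,ds\) is the correction sum.

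\textbf{The gap: the exponent in the non-critical case.} You flag it but do not resolve it. Since \(L=(\check C\alpha\epsilon)^{-1/\alpha}\), one has \(L^{\gamma/2-\alpha}\propto\epsilon^{-(\gamma/2-\alpha)/\alpha}=\epsilon^{1-\frac{\gamma}{2\alpha}}\), not \(\epsilon^{\frac12-\frac{\gamma}{2\alpha}}\). The latter is \(\epsilon^{-1/2}I(\epsilon)\), i.e.\ the quantity in the paper's inequality for \(\sqrt N\) after dividing by \(\sqrt N\). Having already absorbed one factor \(\epsilon^{-1/2}\), you divide by \(\epsilon\) again in \(N\lesssim I^2/\epsilon\). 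So your bound \(\epsilon^{-1}(\epsilon^{1-\gamma/\alpha}+1)=\epsilon^{-\gamma/\alpha}+\epsilon^{-1}\) is too large by a factor \(\epsilon^{-1}\) in its first term, and as written it does not prove the statement. For example, when \(\gamma<2\alpha\) the claimed cost is \(\mathcal{O}(\epsilon^{-1})\), whereas your bound is \(\mathcal{O}(\epsilon^{-\gamma/\alpha})\) with \(\gamma/\alpha>1\).

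With \(I(\epsilon)=\mathcal{O}(\epsilon^{1-\frac{\gamma}{2\alpha}}+1)\) you get \(I^2/\epsilon=\mathcal{O}(\epsilon^{1-\gamma/\alpha}+\epsilon^{-1})\). The correction \(K(\epsilon)\lesssim L^{\gamma-\alpha}\asymp\epsilon^{1-\gamma/\alpha}\) (using \(\gamma>\alpha\)) is of the same order, not negligible, when \(\gamma>2\alpha\). Together these give exactly the stated bound, which is how the paper concludes; there is no normalization issue.

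\textbf{The lower bound \(N\ge\epsilon^{-1}\).} You need it to keep the new step within the scale of Assumption~\ref{asmp:Rn.bndBdNextByPrev}. It cannot come from the cost bound, which only bounds \(N\) from above. In your fixed-point formulation it comes for free from the choice of target: take \(N^\ast=A\,X\) with \(X=I(\epsilon)^2/\epsilon+K(\epsilon)+\epsilon^{-1}\). Then \(h_{n-1}\le\epsilon/\sqrt{\max\{1,\norm{b'(\bar x_{n-1})}\}}\) automatically. Since \(I/\sqrt\epsilon\le\sqrt X\), you have \(\sqrt{N^\ast/\epsilon}\,I\le\sqrt A\,X=N^\ast/\sqrt A\). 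The counting bound then gives a realized count at most \(C N^\ast/\sqrt A+(C'+1)N^\ast/A\le N^\ast\) for \(A\) large. In turn the indicator sum is at most \(N_{\mathrm{real}}\,\epsilon/N^\ast\le\epsilon\). The paper silently identifies the \(N\) in \(h\) with the realized count, so this completed argument is more careful than the paper's.

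\textbf{Two side remarks are wrong but harmless.}
\begin{itemize}
\item \(\gamma\ge1+\alpha\) is an exponent in an upper bound on \(\norm{b'}\), so it cannot make \(\max\{1,\cdot\}\) inactive. However, the branch where the maximum equals \(1\) only contributes \(\sqrt{N/\epsilon}\int s^{-1-\alpha}\,ds=\mathcal{O}(\sqrt{N/\epsilon})\), i.e.\ \(\mathcal{O}(\epsilon^{-1})\), to \(N\).
\item You need neither Assumption~\ref{asmp:Rn.bndNormBDirection} nor comparability of \(\log\abs{\bar x_n}\) and \(\log\abs{\bar x_{n-1}}\). Monotonicity of \(\abs{\bar x_n}\) gives \(\sqrt{\norm{b'(\bar x_{n-1})}}\le C\log(\abs{\bar x_n})^{\gamma/2}\), and the growth condition gives \(\abs{b(\bar x_n)}\ge\check C\abs{\bar x_n}\log(\abs{\bar x_n})^{1+\alpha}\) directly.
\end{itemize}
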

\begin{proof}
    The error indicators are in this setting the same as what we have in \eqref{eq:boundHD}. We set \(h_{n-1} = \sqrt{\frac{\epsilon }{N\max\{1, b'(\bar x_{n-1})\}}}\), which gives an error of order \(\mathcal{O}(\epsilon)\). It then remains to find the cost. We have
    \begin{align*}
        N -1 &= \sum_{n = 1}^{N-1} \frac{1}{\abs{b(\bar x_{n})}h_{n-1}}\abs{b(\bar x_{n})}h_{n-1} \\
        &\leq \sum_{n=1}^{N-1} \left(\frac{\sqrt{\norm{b'(\bar x_{n-1})}}}{\epsilon\abs{b(\bar x_n)}} + C'\frac{\norm{b'(\bar x_{n})}}{\abs{b(\bar x_{n})}}\right)\Delta\abs{\bar x}_{n-1} \\
        &\leq \sum_{n=1}^{N-1} \left(C\sqrt{\frac{N}{\epsilon}}\log(\abs{\bar x_{n}})^{\frac{\gamma}{2}-1-\alpha} + C'\log(\abs{\bar x_{n}})^{\gamma-1-\alpha}\right)\abs{\bar x_{n}}^{-1}\Delta\abs{\bar x}_{n-1}. 
    \end{align*}
    The value of \(N\) now depends on the value of \(\alpha\) and \(\gamma\). We see that by dividing both sides by \(\sqrt{N}\) and bounding the sum by an integral we get
    \[\sqrt{N} \leq 
    \begin{cases}
        C\epsilon^{-\frac{1}{2}}\log(\epsilon^{-1}) + C'\epsilon^{-1}N^{-\frac{1}{2}} & \text{if } \frac{\gamma}{2} = \alpha \\
        C\epsilon^{\frac{1}{2} - \frac{\gamma}{2\alpha}} + C\epsilon^{-\frac{1}{2}} + C'\epsilon^{1 - \frac{\gamma}{\alpha}}N^{-\frac{1}{2}} & \text{otherwise}.
    \end{cases}\]
    By taking the square of the terms in these expressions we conclude that 
    \[N = 
    \begin{cases}
        \mathcal{O}(\epsilon^{-1}\log(\epsilon^{-1})^2) & \text{if } \frac{\gamma}{2} = \alpha \\
        \mathcal{O}(\epsilon^{1-\frac{\gamma}{\alpha}} + \epsilon^{-1}) & \text{otherwise}. \\
    \end{cases}\]
\end{proof}

\section{Numerical experiments}
\label{sec:numExp}
In this section we present a collection of numerical experiments where we test our a priori adaptive methods and compare them with alternative approaches.
Section \ref{sec:numExp_UniformVsAdaptive} and \ref{sec:numExp_1DNoAnaSolution} consider \(b(x) = x^2\) and \(b(x) = e^{x^2}\) respectively.  
Section \ref{sec:numExp_xlogx_c} studies the slower-growing function \(b(x) = x\log(x)^{1+c}\), an example not covered by our theoretical framework, showing how we can adapt in order to still get theoretical rates. 
Section \ref{sec:numExp_UncoupledProblem} and \ref{sec:numExp_CoupledProblem} then considers examples in \(\R^n\) where the components are uncoupled in the former and coupled in the latter, while Section \ref{sec:numExp_slowGrowthMultidim} looks at a higher-dimensional version of \(b(x) = x\log(x)^{1+c}\).
Lastly, Section \ref{sec:numExp_ReacDiff} considers estimating the blow-up time of a semi-discretized reaction-diffusion equation. 
The experiments presented in this section were performed in Julia~\cite{Bezanson2017} and the plots were generated with the Makie library~\cite{Danisch2021}.

\subsection{Note on methods used for comparison}
We will in all the bellow examples compare our adaptive method to using uniform steps and the method introduced by Hirota and Ozawa in \cite{Hirota2006}. 
This method is based on rewriting the underlying ODE into a system for the arc length of the process and then solving this system using a higher-order ODE solver. 
There results assume that the blow-up component satisfies \(\norm{x(t)} = \Theta\left(\frac{1}{(\tau-t)^p}\right)\) for some \(p > 0\). 

In the following figures we plot the observed error for this method against a tolerance \(\epsilon\) that is used as an input for the other numerical methods. 
As the method presented by Hirota and Ozawa does not take a tolerance input we use this value to decide how far we are calculating the arc length. 
The observed error and cost for this method must therefore be read together in order to see how quickly the error is decreasing relative to the increase in cost. 

Due to the use of a fifth-order ODE-solver this method performs impressively, achieving quick convergence for many of the problems we study. However, our adaptive method has some theoretical advantages, as we under less restrictive regularity assumptions on the vector field \(b\) and without any a priori assumptions on the form of the blow-up components have, in contrast to \cite{Hirota2006}, provided rigorous error and complexity bounds for the numerical method. 
Following Remark \ref{rem:ho_sceme}, one may also in some situations use our method with higher-order integrators in order to match the performance of the method by Hirota and Ozawa.
Additionally, our method is not dependent on any parameters that need to be tweaked.

\subsection{Comparing several methods for a 1D ODE}
\label{sec:numExp_UniformVsAdaptive}
In this section we want to verify that Algorithm \ref{alg:1d} provides better results than relying on uniform time steps. 
From the discussion in Section \ref{sec:adaptiveVsUniform1D}, we would expect the cost to be higher in the uniform case. 
Furthermore, in addition to the method by Hirota and Ozawa we also consider 
two alternative adaptive approaches with the first being by Stuart and Floater \cite{Stuart90}. It utilizes a rescaling of the time variable to estimate the explosion time. The other is the modified rescaling algorithm presented by Cho and Wu \cite{Cho2024}, which repeatedly rescales the numerical solution when it reaches a predefined threshold value \(M\), yielding a new problem that is solved at a finer resolution near the blow-up time. 
And to illustrate the potential of higher-order, sensitivity-based a priori adaptivity, 
we also include a second-order method of the form detailed in Remark \ref{rem:ho_sceme}.

For this example we consider \(b(x) = x^2\), giving \(\tau(x_0) = \frac{1}{x_0}\). 
We fix \(x_0 = \frac{1}{2}\) and \(k = 1.1\), implying that \(\tau = 2\). 
Note that Assumption \ref{asmp:b} is satisfied if we choose \(F(b(x)) = G(b(x)) = \frac{1}{x}\), as we then have \(-\frac{d}{dx}F(b(x)) = -\frac{d}{dx}\frac{1}{x} = \frac{1}{x^2} = \frac{1}{b(x)}\) as required. 
\begin{figure}
\center
  \includegraphics[width=\linewidth]{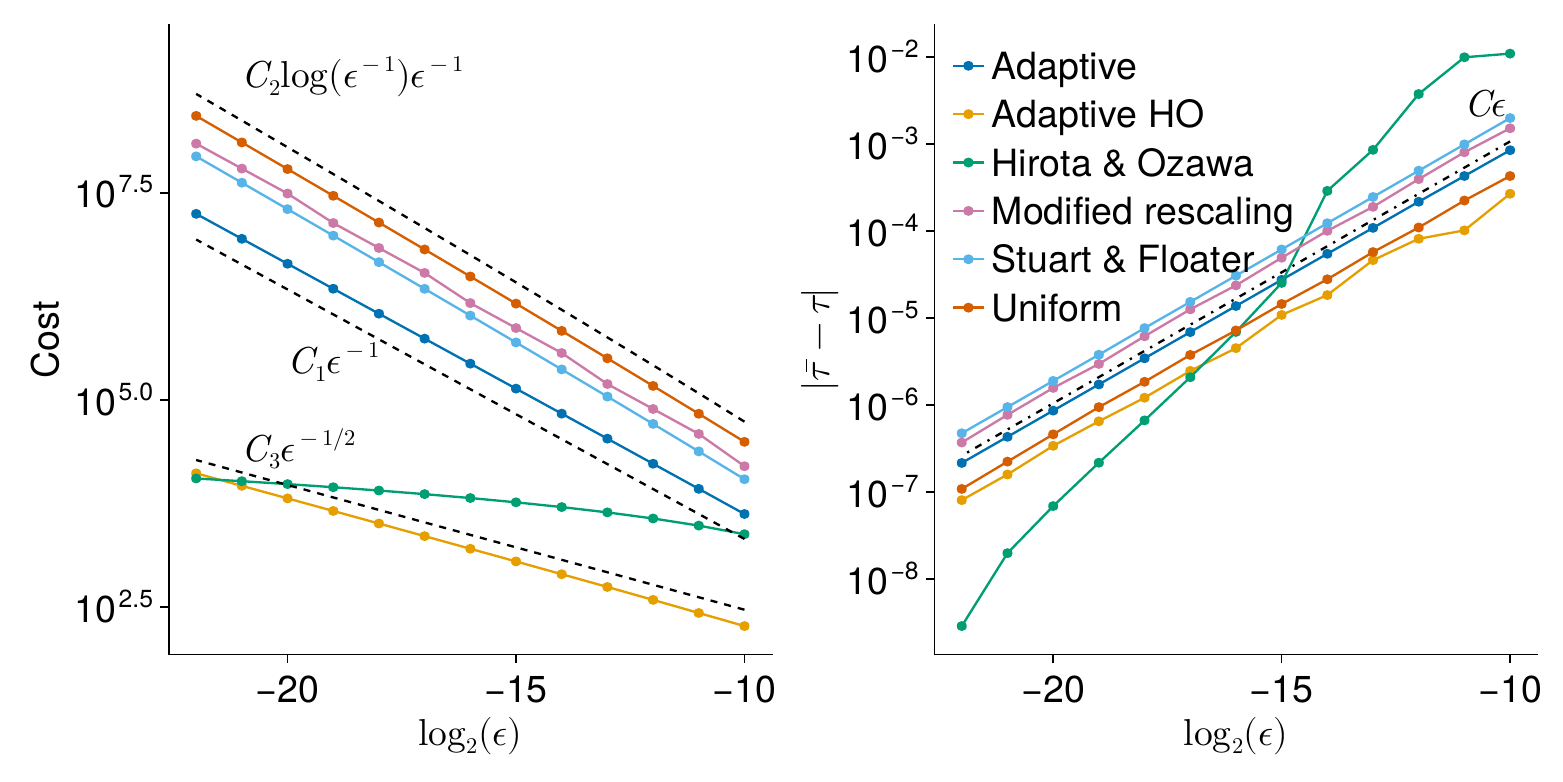}
  \caption{Estimation of blow-up time for \(x'(t) = x(t)^2\). Left: Computational cost plotted against the tolerance. Right: Absolute error plotted against the tolerance.}
  \label{fig:adaptive_vs_uniform}
\end{figure}

The results of our experiment is shown in Figure \ref{fig:adaptive_vs_uniform}. 
From the left plot we see that the rates for the cost are as expected, with the adaptive method outperforming the uniform time stepping with a log-factor, and its higher-order version showing an even lower cost of order \(\mathcal{O}\left(\epsilon^{-\frac{1}{2}}\right)\).
We also note that the method presented by Stuart and Floater in \cite{Stuart90} perform similarly to the uniform approach, which is expected as it can easily be derived from the proof of \cite[Result 3.4]{Stuart90}. 
The same order is observed for the modified rescaling algorithm given by Cho and Wu \cite{Cho2024}, which also is as expected when observing that the number of rescaling cycles scales like \(\mathcal{O}(\log(\epsilon^{-1}))\). 
Moreover, the higher-order method given by Hirota and Ozawa \cite{Hirota2006} yields better results than the other methods. 
From the right part of the figure we see that the error of our estimate is of the expected order for all methods, with Hirota and Ozawa outperforming the alternative methods due to the use of a fifth order method. We see that our theoretical results from Theorem \ref{thm:error_alg1} are reflected, and that adaptive time stepping is advantageous to uniform time stepping. 

\subsection{Using the adaptive method on a 1D ODE without analytical solution}
\label{sec:numExp_1DNoAnaSolution}
Here we test our adaptive method on a problem for which we do not have an analytical solution, and let \(b(x) = e^{x^2}\). 
Note that calculating \(\frac{\tau}{\sqrt{\pi}}\) is equivalent to estimating \(P(X \geq x_0)\) for a random variable \(X \sim N(0, \frac{1}{2})\). 
Additionally, for this choice of \(b\), Assumption \ref{asmp:b} is met with the alternative to Assumption \ref{asmp:b.FandG} given in Remark \ref{rem:followAltAsmpThm1}. 
We fix \(x_0 = 1\) and \(k = 1.1\), and calculate a pseudo solution using our adaptive method over a grid generated using the tolerance \(\epsilon=2^{-33}\).

\begin{figure}
\center
  \includegraphics[width=\linewidth]{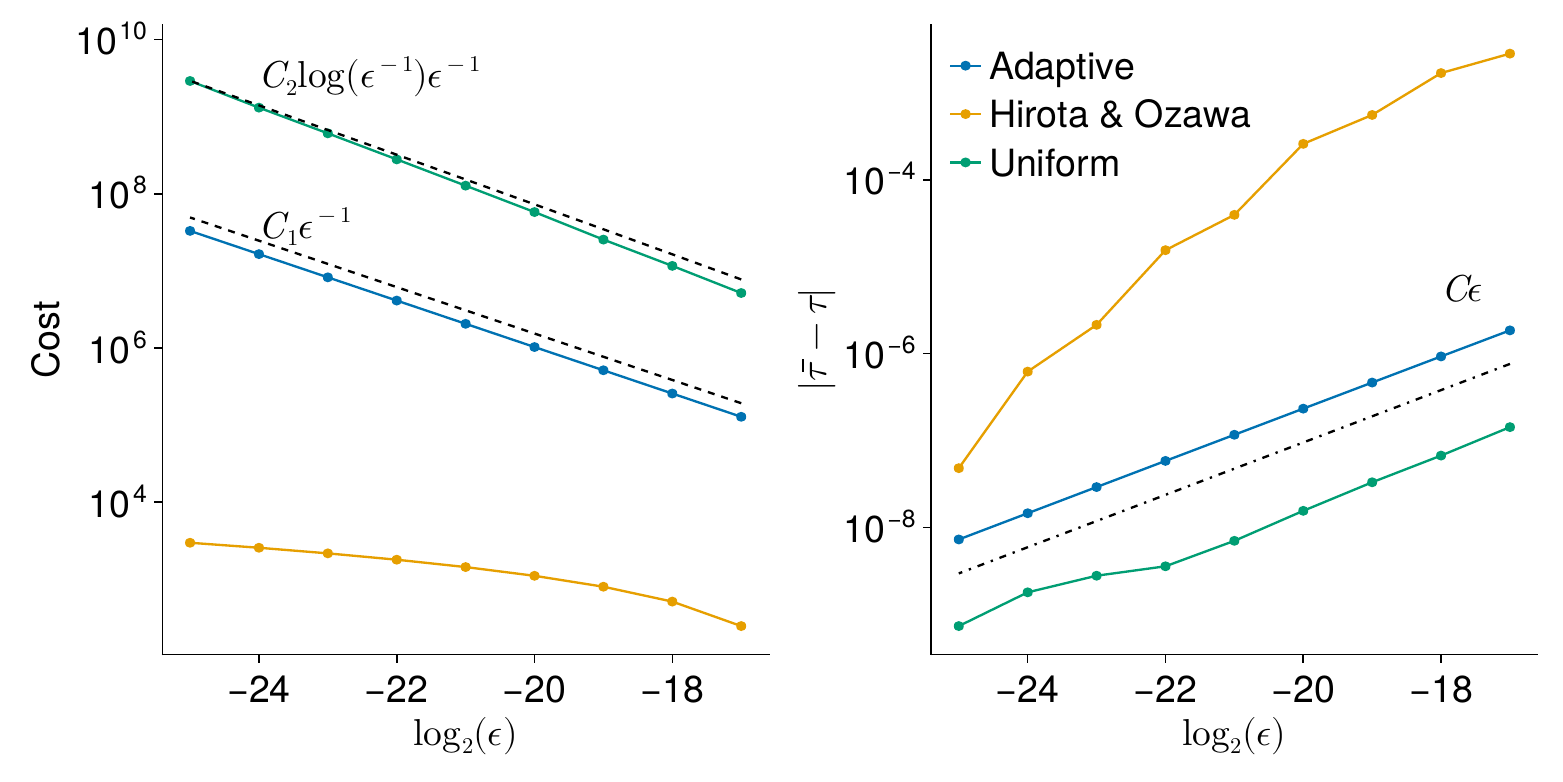}
  \caption{Estimation of blow-up time for \(x'(t) = e^{x(t)^2}\). Left: Computational cost plotted against the tolerance. Right: Absolute error with regards to a pseudo solution plotted against the tolerance.}
  \label{fig:exp_x_x}
\end{figure}
The results are shown in Figure \ref{fig:exp_x_x}, and similarly to what we saw in the previous numerical experiment, the left plot shows that we get the cost that we expect for both methods In the right plot of Figure \ref{fig:exp_x_x} we see that the error has the order we expected over this domain for the adaptive and uniform method. The higher-order method of Hirota and Ozawa again outperform the others by achieving a steeper decline in error with a lower increase in cost. 

\subsection{Extending the adaptive method for 1D ODEs}
\label{sec:numExp_xlogx_c}
In this example we consider \(b(x) = x\log(x)^{1+c}\) for \(c>0 \). Then~\ref{asmp:b.integrabilityCond} is 
violated and this can affect the performance asymptotics of our adaptive method, as 
the upper bound for~\eqref{eq:costEst1D} in the cost estimate for Algorithm~\ref{alg:1d},
now becomes substantially larger. To control the approximation error, we define \(r(\epsilon) = e^{(c\epsilon)^{-\frac{1}{c}}}\), 
as this yields  
\[
|\tau - \tau_\epsilon| = \int_{r(\epsilon)}^\infty\frac{1}{b(x)}\,dx = \epsilon.
\]
Moreover, from \eqref{eq:secondTermExp} we note that it suffices to set \(h_{n-1} = \sqrt{\frac{\epsilon}{Nb'(x_{n-1})}}\) to achieve \(\abs{\bar\tau-\tau_r} = \mathcal{O}(\epsilon)\). 
Then, disregarding higher-order terms, we get \(\sqrt{N} = \frac{1}{\sqrt{\epsilon}}\int_{x_0}^{r(\epsilon)}\frac{\sqrt{b'(kx)}}{b(x)}\,dx\), which implies that 
\[N = \begin{cases}
    \mathcal{O}\left(\epsilon^{-\frac{1}{c}}\right) & c < 1\\
    \mathcal{O}\left(\epsilon^{-1}\log(\epsilon^{-1})^2\right) & c = 1
\end{cases},\] 
as when $c \in (0,1)$, we have
\begin{align*}
    \int_{x_0}^{r(\epsilon)}\frac{\sqrt{b'(kx)}}{b(x)}\,dx &= \int_{x_0}^{r(\epsilon)}\frac{\sqrt{\log(kx)^{1+c} + (1+c)\log(kx)^c}}{x\log(x)^{1+c}}\,dx \nonumber \\
    &\leq \int_{x_0}^{r(\epsilon)}\frac{C'\log(x)^{\frac{1+c}{2}}}{x\log(x)^{1+c}}\,dx \nonumber \\ 
    %&\leq \int_{x_0}^{r(\epsilon)}\frac{C'}{x\log(x)^{\frac{1+c}{2}}}\,dx\\
    & \leq C'\log(r(\epsilon))^{\frac{1-c}{2}},
\end{align*}
for some constant \(C'>0\). When \(c = 1\), similar reasoning yields the upper bound \(C'\log(\log(r(\epsilon)))\). 

For the uniform time stepping, we recall from Section~\ref{sec:adaptiveVsUniform1D} that setting \(h=\frac{\epsilon}{\log(b(r))} = \mathcal{O}\left(\epsilon^{1 + \frac{1}{c}}\right)\) leads to our desired error rate of \(\mathcal{O}(\epsilon)\). However, now we get a significant higher cost using uniform steps, with \(N = \mathcal{O}(\epsilon^{-1-\frac{1}{c}})\). 

\begin{figure}
\center
  \includegraphics[width=\linewidth]{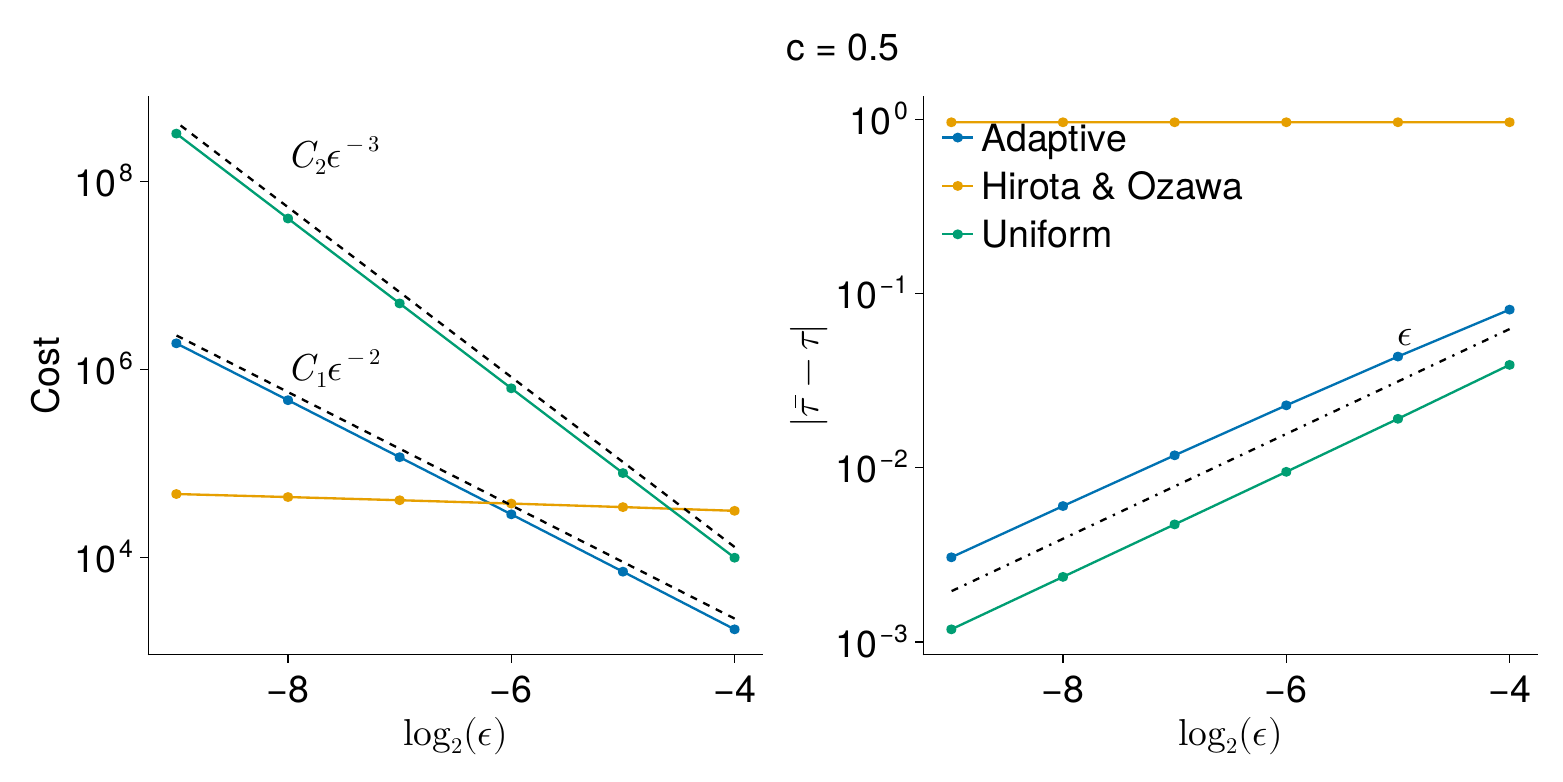}
  \includegraphics[width=\linewidth]{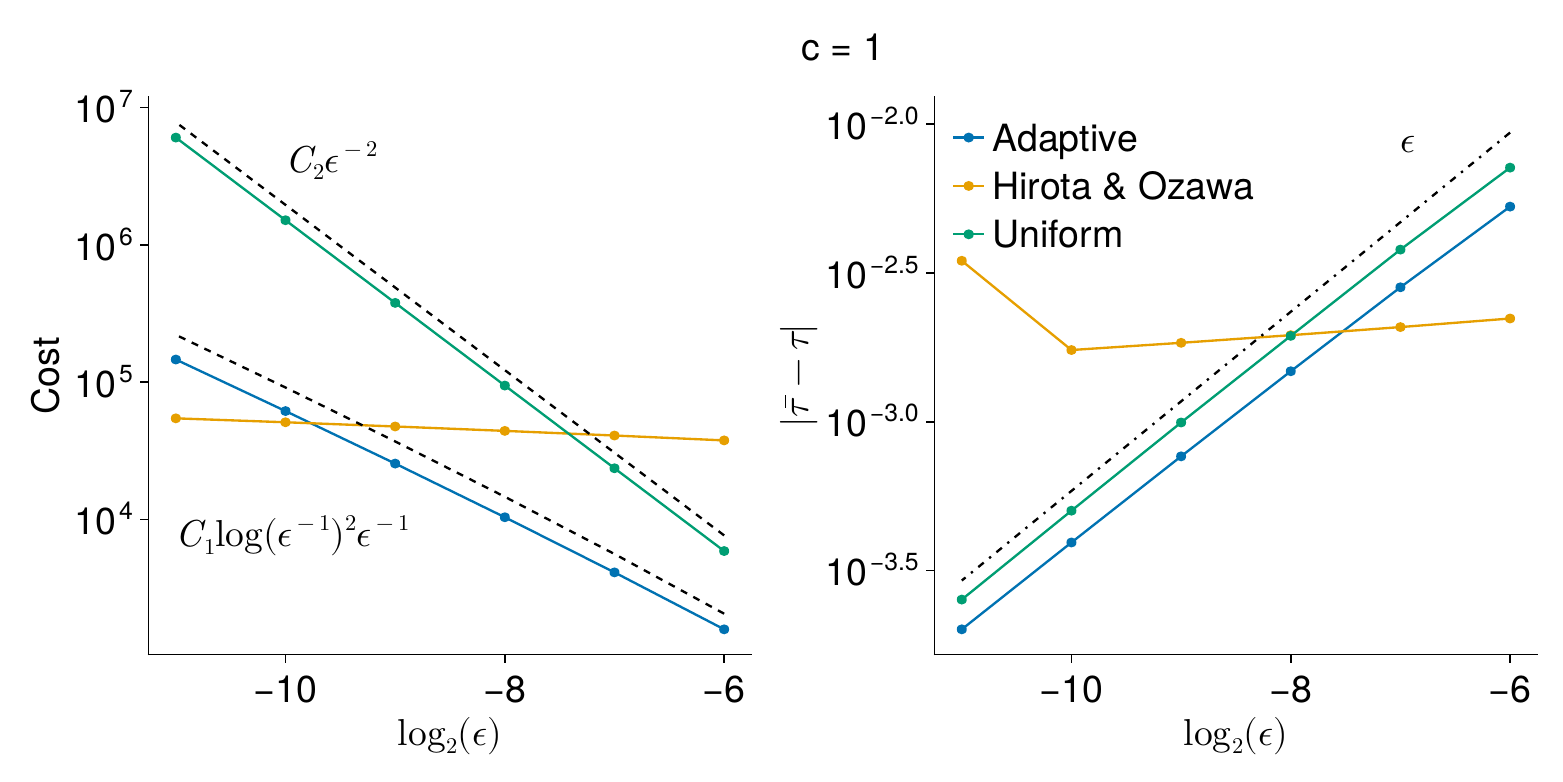}
  \caption{Estimation of blow-up time for \(x'(t) = x(t)\log(x(t))^{1+c}\). Left plots: Computational cost plotted against the tolerance. Right plots: Absolute error plotted against the tolerance.}
  \label{fig:xlogx_1.5.pdf}
\end{figure}

We run this experiment for both \(c=\frac{1}{2}\) and \(c = 1\).
We fix \(x_0 = 2\), which gives \(\tau = \frac{1}{c\log(2)^c}\).
The results are shown in Figure \ref{fig:xlogx_1.5.pdf}. 
From the left plots we see that the cost of using uniform and adaptive steps are as expected for both values of \(c\). 
The same is true for the rate of the error which we from the right plots see that have the expected rate of \(\mathcal{O}(\epsilon)\). 
In this example we see that the error rate for the higher-order method does not resemble what we observed in earlier examples. This is probably due to the fact that \(x(t)\) does not have the required form \(x(t) = \Theta\left(\frac{1}{(\tau-t)^p}\right)\). 

\subsection{An uncoupled problem in multiple dimensions}
\label{sec:numExp_UncoupledProblem}
We consider a simple system of ODEs where we are able to analytically find the blow-up time of \(\abs{x}\) in order to numerically test Algorithm \ref{alg:Rn}. We consider
\[b(x) = 
    \begin{bmatrix}
        x_1^3 \\
        x_2^5 
    \end{bmatrix},
\]
with \(x(0) = (\sqrt{2}, 1)\). 
We let \(D=\R^2\setminus B(\sqrt{3})\).
An illustration is given in Figure \ref{fig:ex_simple_multidim}.

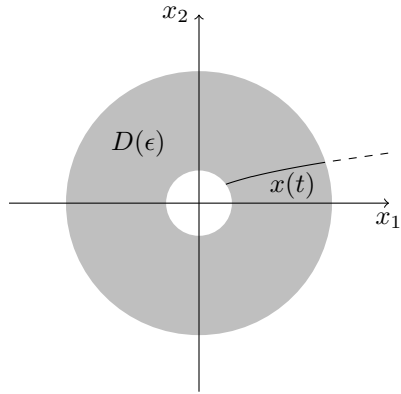
\begin{figure}
    \centering
    \begin{tikzpicture}[scale=0.25]
        \fill[fill=gray!50!white] (0,0) circle [radius=7cm];
        \fill[fill=white] (0,0) circle [radius=1.732cm];
        %\draw[dashed] (1.4142, 1) to[bend right] node[below right] {\(x(t)\)} (45: 7);
        \draw[domain=0:0.239, samples=100] plot ({(1/2-2*\x)^(-1/2)}, {(1-4*\x)^(-1/4)}) node[below left] {\(x(t)\)};
        \draw[domain=0.24:0.245, dashed, samples=100] plot ({(1/2-2*\x)^(-1/2)}, {(1-4*\x)^(-1/4)});
        %\draw[domain=0:0.062553, dashed, samples=50] plot ({(1-2*\x)^(-1/2)}, {(1/4-4*\x)^(-1/4)}) node[below left] {\(x(t)\)};
        \node at (135: 4.5) {\(D(\epsilon)\)};
        \draw[->] (-10,0) -- (10,0) node[below] {\(x_1\)};
        \draw[->] (0,-10) -- (0,10) node[left] {\(x_2\)};
    \end{tikzpicture}
    \caption{Illustration of example studied in Section \ref{sec:numExp_UncoupledProblem}}
    \label{fig:ex_simple_multidim}
\end{figure}

Here \(x_1\) and \(x_2\) are uncoupled, allowing us to find explicit expressions for \(x_1(t)\) and \(x_2(t)\). 
We get \(x_1(t) = \text{sign}(x_1(0))\left(\frac{1}{x_1(0)^2} -2t\right)^{-\frac{1}{2}}\) and \(x_2(t) = \text{sign}(x_2(0))\left(\frac{1}{x_2(0)^4} -4t\right)^{-\frac{1}{4}}\). 
Thus we have \(\tau(x) = \min\left\{\frac{1}{2x_1^2}, \frac{1}{4x_2^4}\right\}\), giving \(\tau=\frac{1}{4}\). 
Furthermore, 
\[b'(x) = 
    \begin{bmatrix}
        3x_1^2 & 0 \\
           0 & 5x_2^4 
    \end{bmatrix}.
\]
From \(b\) and \(b'\) we see that assumptions \ref{asmp:Rn.lwrAndUprBndGrowthSqrdNorm}, \ref{asmp:Rn.directionalityOfB.sign}, \ref{asmp:Rn.bDerOverBBound} and \ref{asmp:Rn.bndNormBDirection} are satisfied, with parameters \(\alpha = 2, \nu = 2\) and \(\upsilon = 1\).

To show that Assumption \ref{asmp:Rn.BndGradTau} holds, we use \eqref{eq:gradTauDir} to calculate \(\nabla\tau_\epsilon(x)\), giving
\[\nabla\tau_\epsilon(x) = -\left(\frac{\text{sign}(x_1(0))}{x_1(0)^3}\frac{x_1(\tau)^4}{x_1(\tau)^4 + x_2(\tau)^6}, \frac{\text{sign}(x_2(0))}{x_2(0)^5}\frac{x_2(\tau)^6}{x_1(\tau)^4 + x_2(\tau)^6}\right),\]
so there exist \(C>0\) such that \(\abs{\nabla\tau(x)} \leq \frac{C}{\abs{b(x)}}\).

Lastly, in order to see that Assumption \ref{asmp:Rn.bndBdNextByPrev} holds we consider the term \(\frac{\epsilon}{\sqrt{\norm{b'(x)}}}b(x)\). From Lemma \ref{lemma:boundTauDiff} we have \(r(\epsilon) = \mathcal{O}(\epsilon^{-\frac{1}{\alpha}}) = \mathcal{O}(\epsilon^{-\frac{1}{2}})\), implying that \(\epsilon = r(\epsilon)^{-2} \leq \abs{x(t)}^{-2}\). 
Thus
\[\frac{\epsilon}{\sqrt{\norm{b'(x)}}}b(x) \leq \frac{1}{\left(\max\left\{2x_1^2, 5x_2^4\right\}\right)^{\frac{1}{2}}(x_1^2+x_2^2)}[x_1^3, x_2^5].\]
Using that \(\norm{x} \geq \sqrt{3}\) in our case, one can show that the expression above can be bounded by \(c[x_1, x_2]\) for some constant \(c\geq1\). 
Then, as \(\norm{b'(x)}\) is increasing in both components of \(x\), our inequality becomes \(\norm{b'(x + \theta b(x)h(x))} \leq \norm{b'(cx)} \leq k\norm{b'(x)}\), which for any \(\theta\in(0,1)\) holds for some constant \(k\geq1\). 

For the uniform time steps we from \eqref{eq:boundHD} get that
\[\abs{\bar\tau(x_0)-\tau_{\epsilon}(x_0)} \leq kC\sum_{n=1}^N \frac{\norm{b'(\bar x_{n-1})}\abs{b(\bar x_{n-1})}}{\abs{b(\bar x_{n-1})}}h^2 \leq Ch\sum_{n=1}^N \abs{\bar x_{n-1}}^{\gamma -\alpha-1}\Delta\abs{\bar x}_{n-1}.\] 
Now, as we in this example have \(\gamma-\alpha-1  \geq 0\), we get
\[\abs{\bar\tau(x_0)-\tau_{\epsilon}(x_0)} \leq Ch\int_{\abs{x_0}}^{r(\epsilon)}\abs{\bar x}^{\gamma - \alpha - 1}\,d\abs{x} \leq Chr(\epsilon)^{\gamma-\alpha} \leq Ch\epsilon^{1-\frac{\gamma}{\alpha}}.\]
Equating to \(\epsilon\) and solving for \(h\) while ignoring constants gives \(h = \epsilon^\frac{\gamma}{\alpha} = \epsilon^{2}\). This yields a computational cost of order \(\mathcal{O}\left(\epsilon^{-\frac{\gamma}{\alpha}}\right) = \mathcal{O}\left(\epsilon^{-2}\right)\). 

However, note that which component that explodes depends on \(x(0)\), and if \(x_2(t)\) blows up first we in practice have \(\alpha = 4\). Then \(\gamma-\alpha-1 = -1\), which gives an alternative expression for \(h\). Later, in Section \ref{sec:numExp_CoupledProblem}, we show that \(h = \frac{\epsilon}{\log(r(\epsilon))}\) is sufficient in this situation in order to have \(\abs{\bar\tau-\tau}=\mathcal{O}(\epsilon)\), with a cost of \(\mathcal{O}(\epsilon^{-1})\log(\epsilon^{-1})\). For our choice of \(x(0)\) both components explode at \(t = \frac{1}{4}\).
We test both choices of uniform \(h\) and call the latter of these Log Uniform.  

\begin{figure}
\center
  \includegraphics[width=\linewidth]{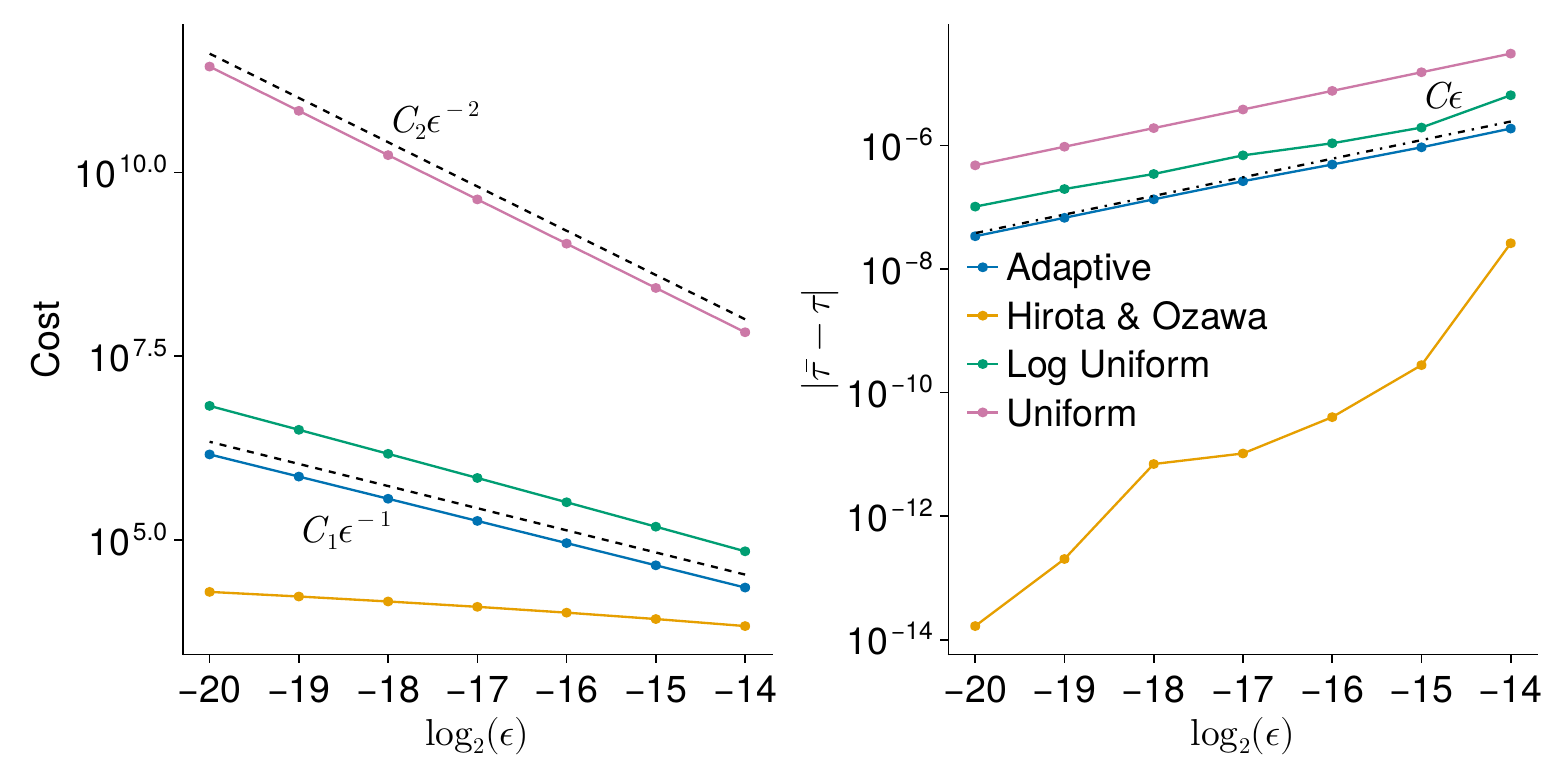}
  \caption{Estimation of blow-up time for the uncoupled multidimensional problem studied in Section \ref{sec:numExp_UncoupledProblem}. Left: Computational cost plotted against the tolerance. Right: Absolute error plotted against the tolerance.}
  \label{fig:simple_multidim}
\end{figure}

The cost of the different methods are show in the left part of Figure \ref{fig:simple_multidim}, and are as expected, with the log uniform version having a cost of a order slightly higher than linear. The error rate is also consistent with the theory. Again, the higher-order method of Hirota and Ozawa outperforms the lower-order methods. 

\subsection{Estimating blow-up of a coupled multidimensional problem}
\label{sec:numExp_CoupledProblem}
In this experiment we look at a system of ODEs where the two components are coupled. We let 
\[b(x) = \begin{bmatrix}
        x_1^3 + x_1x_2^2 \\
        x_2^3 + x_1^2x_2
         \end{bmatrix},\]
and fix both \(x(0) = (1, 2)\) and \(D = \R^2\setminus B(\sqrt{5})\). The Jacobi matrix \(b'(x)\) is then
\[b'(x) = \begin{bmatrix}
        3x_1^2 + x_2^2 & 2x_1x_2 \\
        2x_1x_2 & x_1^2 + 3x_2^2
         \end{bmatrix}.\]
As \(b'(x)\) is symmetric we have that \(\norm{b'(x)}_2 = \lambda_{\max}(b'(x)) = 3(x_1^2+x_2^2)\). Simple calculations show that assumptions \ref{asmp:Rn.lwrAndUprBndGrowthSqrdNorm}, \ref{asmp:Rn.directionalityOfB.ineq} and \ref{asmp:Rn.bndNormBDirection} are met with \(\alpha = 2\). In addition, Lemma \ref{lemma:suffCondAsmpbDerOverBBound} holds with \(\gamma=2\), implying that Assumption \ref{asmp:Rn.bDerOverBBound} holds with \(\nu=2\) and \(\upsilon=1\). We use the sufficient condition described in Remark \ref{rem:suffCondAsmpFirstVar} to show that Lemma \ref{lemma:SuffCondAsmpGradTau} and therefore Assumption \ref{asmp:Rn.BndGradTau} holds. We have \(\underset{v\in \bar B\left(\abs{b(x)}\right)}{\sup} v^Tb'(x)v = \frac{1}{2}\lambda_{\max}\left(b'(x)+b'(x)^T\right)\abs{b(x)}^2\). Thus \eqref{eq:suffCondAssmpFirstVar} gives
\[3(x_1^2+x_2^2)^4 \leq C^2b(x)^Tb'(x)b(x) = 3(x_1^2+x_2^2)^4.\] Moreover, note that \(C\abs{b(x_0)} = \sqrt{5}\geq1\) and \(b(x_0)b'(x_0)b(x_0)-\lambda_{\max}(b'(x_0)) > 0\). Lastly, for Assumption \ref{asmp:Rn.bndBdNextByPrev} we first note that from Lemma \ref{lemma:boundTauDiff}\ we have \(\epsilon =\mathcal{O}(r^{-2}) \geq \mathcal{O}((x_1^2 + x_2^2)^{-1})\). Then \(\epsilon\frac{1}{\sqrt{\norm{b'(x)}}}b(x) = \frac{1}{\sqrt{3(x_1^2 + x_2^2)}}[x_1, x_2]\). 
This gives
\[\norm{b'\left(x +\epsilon\frac{\theta}{\sqrt{\norm{b'(x)}}}b(x) \right)}_2 \leq 3\left(\sqrt{x_1^2 + x_2^2}+\frac{1}{\sqrt{3}}\right)^2 \leq 3k(x_1^2 + x_2^2) = k\norm{b'(x)}_2.\]

For the uniform time steps, note that \(\alpha = \gamma\) gives \(\gamma-\alpha-1 = -1\). Assume that we have constants \(\hat{C}, \beta>0\) such that \(b(x)\cdot x \leq \hat{C}\abs{x}^{2+\beta}\). 
Then, starting from \eqref{eq:boundHD} we get
\begin{align*}
    \abs{\bar\tau(x_0)-\tau_{\epsilon}(x_0)} &\leq Ch\sum_{n=1}^N \abs{\bar x_{n-1}}^{\gamma -\alpha-1}\Delta\abs{\bar x}_{n-1} \\
    &= Ch\sum_{n=1}^N \left(\abs{\bar x_{n}}^{\gamma -\alpha-1} + \abs{\bar x_{n-1}}^{\gamma -\alpha-2}\Delta\abs{\bar x}_{n-1}\right)\Delta\abs{\bar x}_{n-1} \\
    &\leq Ch\int_{\abs{x_0}}^{r(\epsilon)}\abs{\bar x_n}^{\gamma-\alpha-1}\,d\abs{x} + Ch^2\sum_{n=1}^N\abs{\bar x_{n-1}}^{\gamma + \beta - \alpha - 1}\Delta\abs{\bar x}_{n-1} \\
    &\leq Ch\log(r(\epsilon)) + Ch^2r(\epsilon)^\beta,
\end{align*}
where the jump to the second line adds and subtracts \(\abs{\bar x_{n}}^{\gamma -\alpha-1}\) before taking a Taylor expansion of the subtracted term.
Setting each term equal to \(\epsilon\) and solving for \(h\) gives that setting \(h = \min\left\{\frac{\epsilon}{\log\left(\epsilon^{-\frac{1}{\alpha}}\right)}, \epsilon^{\frac{1}{2}(\frac{\beta}{\alpha}+1)}\right\} = \frac{\epsilon}{\log\left(\epsilon^{-1}\right)}\) ensures \(\abs{\bar\tau(x_0)-\tau_{\epsilon}(x_0)} = \mathcal{O}(\epsilon)\). 
The cost is then \(Cost = \mathcal{O}\left(\epsilon^{-1}\log\left(\epsilon^{-1}\right)\right)\). 
From Theorem \ref{thm:propertiesAlgRn} we expect the same error rate for the adaptive method, but at a lower cost of \(\mathcal{O}(\epsilon^{-1})\). 

From the left plot of Figure \ref{fig:coupled_problem} we see that the costs are as expected, with again the higher-order method outperforming the first-order methods for smaller \(\epsilon\). The same is observed for the error rates in the right plot of the figure. Here the error is calculated against a pseudo solution generated using the adaptive method with \(\epsilon=2^{-27}\).
\begin{figure}
\center
  \includegraphics[width=\linewidth]{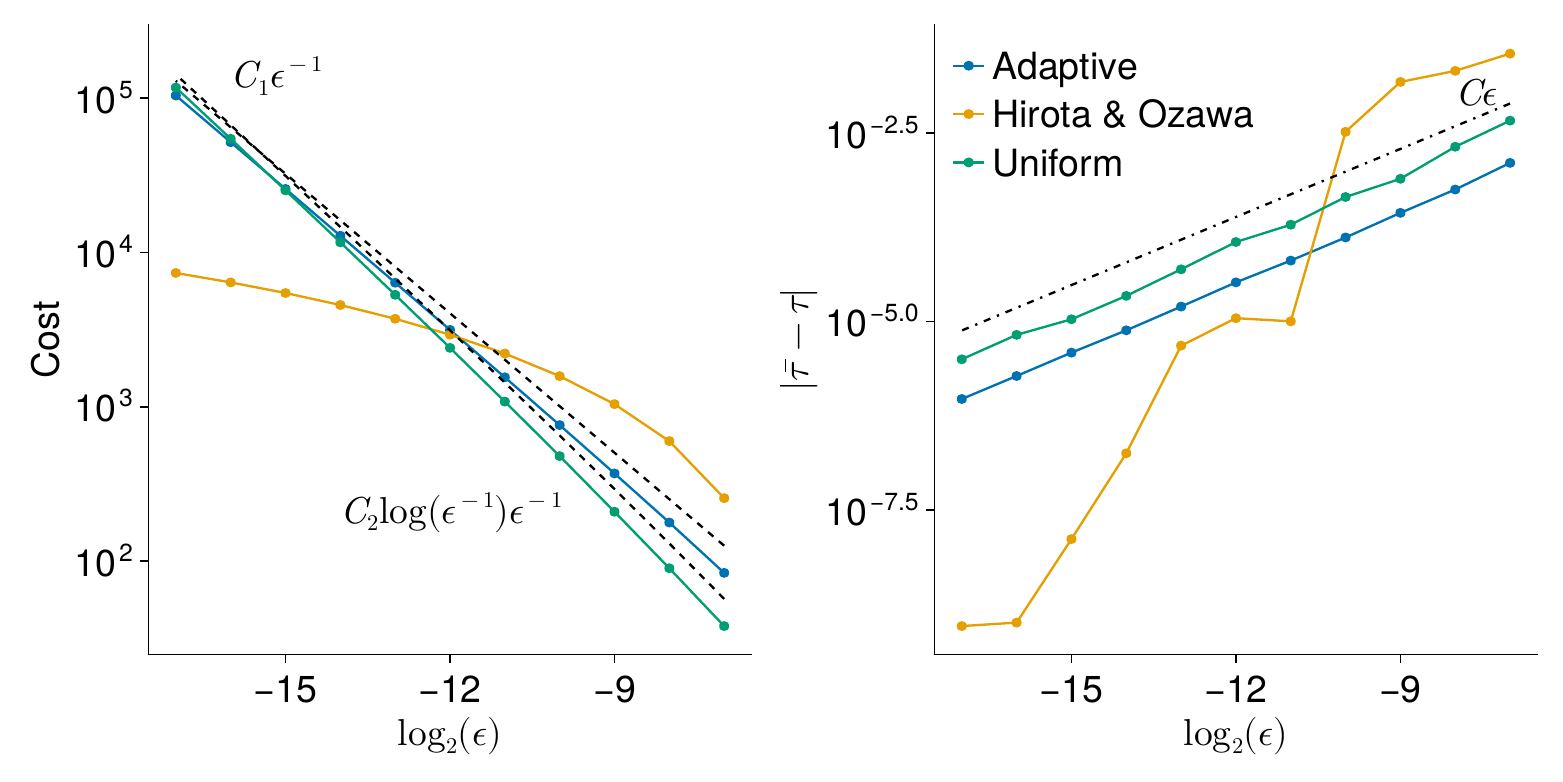}
  \caption{Estimation of blow-up time for the coupled multidimensional problem studied in Section \ref{sec:numExp_CoupledProblem}. Left: Computational cost plotted against the tolerance. Right: Absolute error with regards to a pseudo solution plotted against the tolerance.}
  \label{fig:coupled_problem}
\end{figure}

\subsection{Estimating blow-up of slow processes in multiple dimensions}
\label{sec:numExp_slowGrowthMultidim}
Similarly to Section \ref{sec:numExp_xlogx_c}, we here want to considerer functions \(b:\R^n\to\R^n\) that explodes relatively slow. Let 
\[b(x) = 
    \begin{bmatrix}
        x_1\log(x_1^2 + 2x_2^2)^{1+c} \\
        x_2\log(2x_1^2 + x_2^2)^{1+c} 
    \end{bmatrix},\]
and set \(x(0) = (4, 3)\). We let \(D = \R^2\setminus B(5)\). 
This function satisfies Assumption \ref{asmp:RnAlt} with \(\alpha  = c, \gamma = 1 + c\) and some finite values for the constants \(\check{C}\) and \(C_3\). 
Assumption \ref{asmp:Rn.bndNormBDirection} also trivially holds.  
We let \(c \in \left\{\frac{1}{2}, 1\right\}\). 
For Assumption \ref{asmp:Rn.directionalityOfB.ineq} it suffices to note that  
\[\frac{b(x)\cdot x}{\abs{b(x)}\abs{x}} \geq \frac{\log(x_1^2 + x_2^2)^{1+c}}{\log(2(x_1^2 + x_2^2))^{1+c}} > 0.\]
Let \(R \coloneqq x_1^2 + x_2^2\). Then, for Assumption \ref{asmp:Rn.BndGradTau} we have that \eqref{eq:suffCondAssmpFirstVar} holds as
\begin{equation*}
    \lambda_{\max}(b'(x))\abs{b(x)}_2^2 \leq CR^2\log(2R^2)^{3+3c} \leq C'^2 b(x)^Tb'(x)b(x),
\end{equation*}
for some constants \(C, C'>0\).
Therefore Assumption \ref{asmp:Rn.BndGradTau} is satisfied as the above together with Assumption \ref{asmp:Rn.directionalityOfB.ineq} is sufficient for Lemma \ref{lemma:SuffCondAsmpGradTau} to hold.
For Assumption \ref{asmp:Rn.bndBdNextByPrev} we get \(\frac{\epsilon}{\sqrt{\norm{b'(x)}_2}}b(x) \leq \frac{c\epsilon b(x)}{\log(R^2)^{\frac{1+c}{2}}}.\)
One may then show that
\[\frac{\norm{b'\left(x + \frac{c\theta\epsilon b(x)}{\log(R^2)^{\frac{1+c}{2}}}\right)}_2}{\norm{b'(x)}_2} \leq k,\]
for some constant \(k\), which is what we need. 

For the uniform steps, we consider the error indicator we found in the proof of Theorem \ref{thm:propertiesAlgRn}, giving
\[\abs{\bar\tau - \tau_\epsilon} \leq kC\sum_{n=1}^{N}\frac{\norm{b'(\bar x_{n-1})}\abs{b(\bar x_{n-1})}}{\abs{b(\bar x_{n-1})}}h^2 \leq \frac{kCh}{C_2}\sum_{n=1}^{N}\frac{\norm{b'(\bar x_{n-1})}}{\abs{b(\bar x_{n-1})}}\Delta\abs{\bar x} \leq \frac{kCh}{C_2}\log(r)\]
as \(\gamma - \alpha - 1 = 0\). Then, in order to bound the error by \(\epsilon\), we must use the uniform step \(h = C\frac{\epsilon}{\log(r(\epsilon))}\). From Corollary \ref{corr:boundTauDiffAlt} we have that \(r(\epsilon) = e^{\left(\frac{1}{\check{CC_2c\epsilon}}\right)^{\frac{1}{c}}}\), thus the cost of using uniform steps is \(N = \mathcal{O}\left(\epsilon^{-3}\right)\) for \(c = \frac{1}{2}\) and \(N = \mathcal{O}(\epsilon^{-2})\) when \(c = 1\).

From Corollary \ref{corr:propertiesAlgRnLog} we have that the same error is achieved using the adaptive method, but at the lower cost of \(N = \mathcal{O}(\epsilon^{-2})\) when \(c = \frac{1}{2}\) and \(N = \mathcal{O}(\epsilon^{-1}\log(\epsilon^{-1})^2)\) for \(c = 1\). 

The results from these experiments are shown in Figure \ref{fig:log_multidim.pdf}. The pseudo solution used to calculate the error is generated using the adaptive method with \(\epsilon=2^{-17}\). From the figure we see that both methods have the expected error of order \(\mathcal{O}(\epsilon)\) for both values of $c$, and from the plots on the left we see that the cost is as expected. Similarly to what we observed in Section \ref{sec:numExp_xlogx_c}, the higher-order method does not perform well here, again most likely due to the blow-up component not having the required form. 

\begin{figure}
\center
  \includegraphics[width=\linewidth]{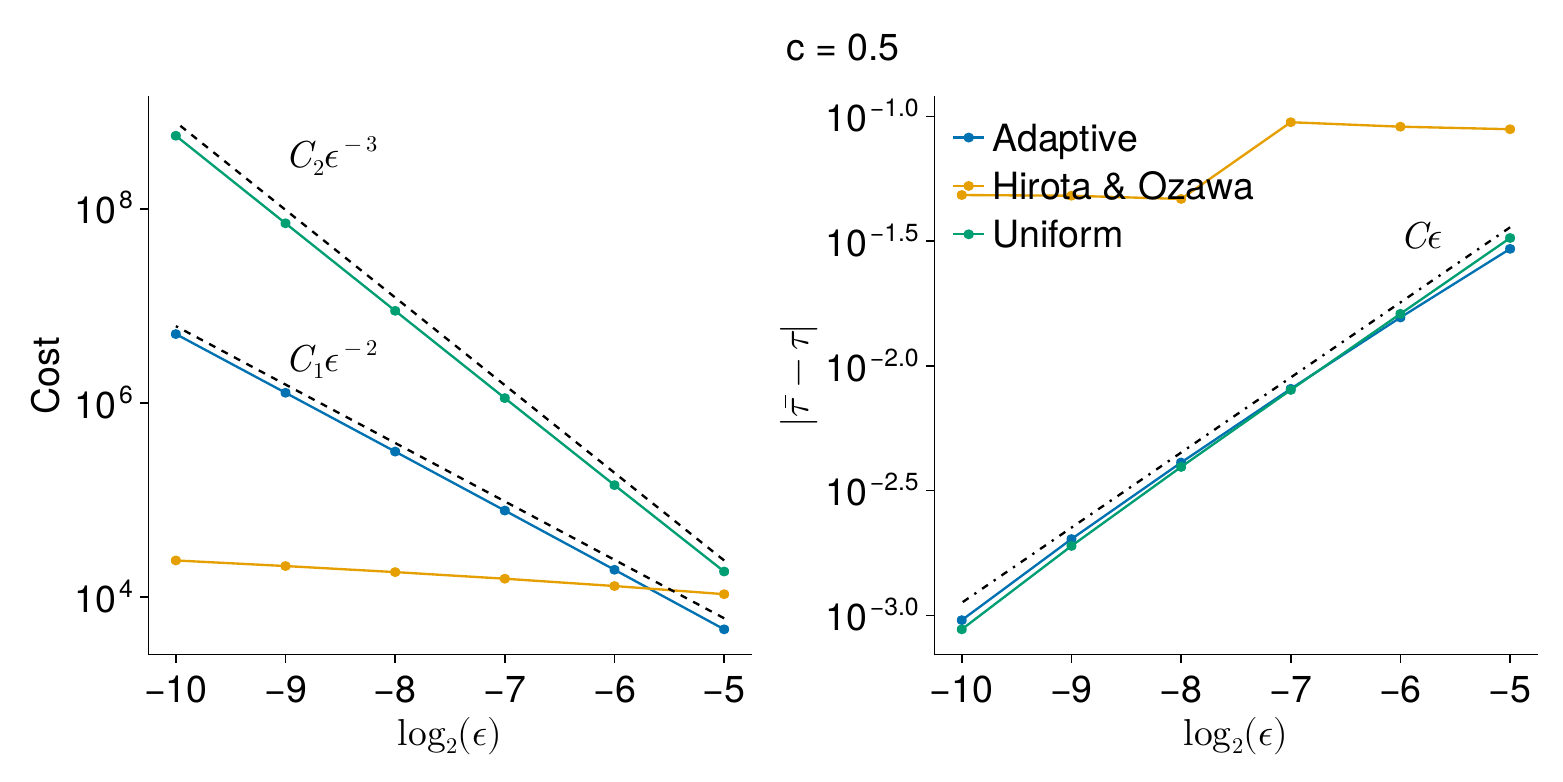}
  \includegraphics[width=\linewidth]{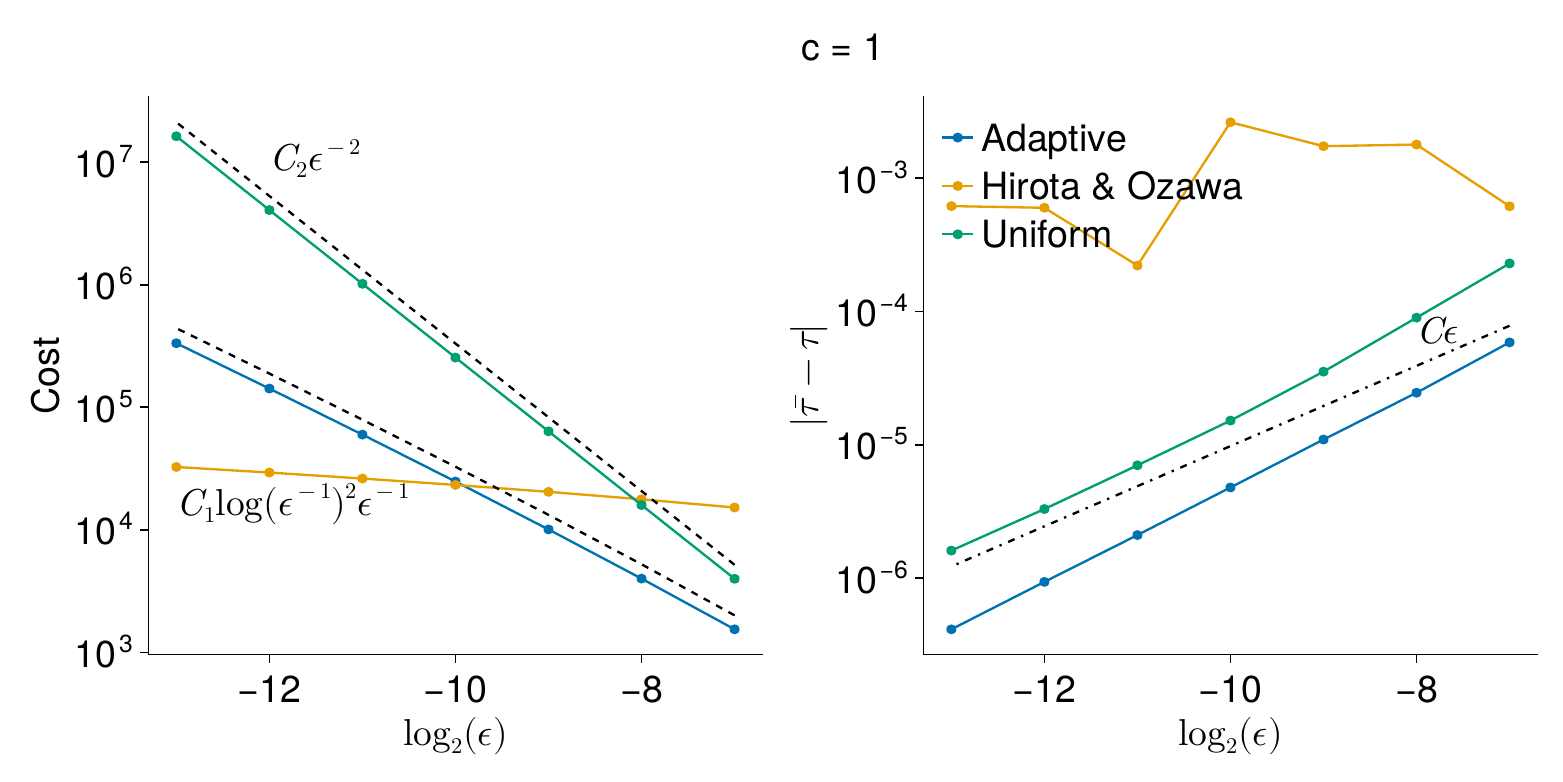}
  \caption{Estimation of blow-up time for the problem studied in Section \ref{sec:numExp_slowGrowthMultidim}.
  Left: Computational cost plotted against the tolerance. Right: Absolute error with regards to a pseudo solution plotted against the tolerance.}
  \label{fig:log_multidim.pdf}
\end{figure}

\subsection{Semi-discretized reaction-diffusion equation}
\label{sec:numExp_ReacDiff}

In the last example, we estimate the blow-up time of a semi-discretized version of a reaction-diffusion equation with a quadratic nonlinearity. We are not able to verify that this problem satisfies Assumption \ref{asmp:Rn}, so this is a purely experimental numerical study.
The PDE is given by \(\frac{\partial u}{\partial t}(t, x) = \Delta u(t, x) + f(u(t, x))\) for \((t,x) \in (0,\infty) \times (0,1)\) with \(f(x) = x^2\), Dirichlet-0 boundary conditions \(u(t, 0) = u(t, 1) = 0\) and the initial condition \(u(x, 0) = u_0(x) = 100\sin(\pi x)\).

The semi-discretized version of this PDE, with finite-difference approximation in space, is the following system of ODEs in $\R^{m-1}$
\[
\begin{cases}
    \frac{dx_1}{dt} = m^2(-2x_1 + x_2) + x_1^2, \\
    \hspace{2.5mm}\vdots \\
    \frac{dx_k}{dt} = m^2(x_{k-1}-2x_k + x_{k+1}) + x_k^2, \\
    \hspace{2.5mm}\vdots \\
    \frac{dx_{m-1}}{dt} = m^2(x_{m-2}-2x_{m-1}) + x_{m-1}^2.
\end{cases}
\]
The dimension parameter $m \in \mathbb{N}$ is related to the spatial grid of the finite-difference approximation $(0, 1/m, \ldots, 1) \in \R^{m+1}$, and the boundary condition is \(x_0(t) = x_m(t) = 0\) for all $t\ge 0$.  
As we here need to discretize in both time and space we let \(x_k^n\) denote the numerical estimate of \(x_k(t_n)\) which again is an approximation of \(u(t_n, \frac{k}{m})\). 
We fix \(x^0 = \left[0, 100\sin\left(\frac{\pi}{m}\right), 100\sin\left(\frac{2\pi}{m}\right),\hdots, 100\sin\left(\frac{\pi(m-1)}{m}\right), 0\right]\). 

We want to numerically estimate the convergence rates for both refinements and for both adaptive and uniform time stepping. 
We therefore estimate \(\bar\tau\) for fixed \(m=32\) and let \(\epsilon\) vary between \(2^{-18}\) and \(2^{-25}\) in order to estimate the convergence rate when refining \(\epsilon\). 
In order to do the same for \(m\) we fix \(\epsilon=2^{-23}\) and let \(m \in \{4, 8, 16, 32, 64, 128, 216, 512\}\). 
Furthermore, experimentation shows that following the alternative choice for \(h\) given in Remark \ref{rem:altHRn} and setting \(h_n = \min\left\{\frac{\epsilon\sqrt{\abs{b(x)}}}{\sqrt{\abs{b'(x)b(x)}}}, \frac{1}{2m^2}\right\}\)  gives a significantly lower cost for \(m\geq16\).

The results of this study are given in Table \ref{tab:resultsFixedNVaryEps} and Table \ref{tab:resultsFixedEpsVaryN}. 
From the first table we see that when refining \(\epsilon\) the error of both methods appear to be of order \(\mathcal{O}(\epsilon)\). 
For the cost it looks like there is a log-term involved in the cost for the uniform version, while it is linear for the adaptive method. 
These results coincide with what we observed in earlier examples.
On the other hand, when refining \(n\) we note that the order of the cost and error looks to be quite similar for the two methods. The error has a rate of about \(\mathcal{O}(\epsilon^{1.5})\), while the cost is constant for the adaptive method, and actually slightly decreasing in the uniform case. 

\begin{table}
  \scriptsize
  \center
  \begin{tabular}{c | c c c | c c c}
    \hline
                 & \multicolumn{3}{c}{Adaptive} & \multicolumn{3}{c}{Uniform} \\
    \hline
    \(\epsilon\) & \(\bar\tau_\epsilon\) & \(\log_2(\abs{\bar\tau_{\epsilon}-\bar\tau_{2\epsilon}})\) & \(\log_{2}(N)\) & \(\bar\tau_\epsilon\) & \(\log_2(\abs{\bar\tau_{\epsilon}-\bar\tau_{2\epsilon}})\) & \(\log_{2}(N)\) \\
    \hline
    \(2^{-18}\) & 0.010977404445 &        & 16.18 & 0.010977979544 &        & 16.75 \\
    \(2^{-19}\) & 0.010977205587 & -22.26 & 17.18 & 0.010977471323 & -20.91 & 17.85 \\
    \(2^{-20}\) & 0.010977106560 & -23.27 & 18.18 & 0.010977244134 & -22.07 & 18.95 \\
    \(2^{-21}\) & 0.010977056824 & -24.26 & 19.18 & 0.010977125526 & -23.01 & 20.04 \\
    \(2^{-22}\) & 0.010977031941 & -25.26 & 20.18 & 0.010977063543 & -23.94 & 21.12 \\
    \(2^{-23}\) & 0.010977019507 & -26.26 & 21.18 & 0.010977035499 & -25.09 & 22.20 \\
    \(2^{-24}\) & 0.010977013282 & -26.26 & 22.18 & 0.010977021103 & -26.05 & 23.27 \\
    \(2^{-25}\) & 0.010977010170 & -27.26 & 23.18 & 0.010977014278 & -27.13 & 24.34 \\
    \hline    
  \end{tabular}
  \caption{Approximations of \(\bar\tau\) for fixed \(m = 32\) and varying \(\epsilon\)}
  \label{tab:resultsFixedNVaryEps}
\end{table}

\begin{table}
  \scriptsize
  \center
  \begin{tabular}{c | c c c | c c c}
    \hline
          & \multicolumn{3}{c}{Adaptive} & \multicolumn{3}{c}{Uniform} \\
    \hline
    \(m\) & \(\bar\tau_n\) & \(\log_2\left(\abs{\bar\tau_{n}-\bar\tau_{\frac{n}{2}}}\right)\) & \(\log_{2}(N)\) & \(\bar\tau_n\) & \(\log_2\left(\abs{\bar\tau_{n}-\bar\tau_{\frac{n}{2}}}\right)\) & \(\log_{2}(N)\) \\
    \hline
    \(4\)   & 0.010702612035 &        & 21.20 & 0.010702625433 &        & 22.27 \\
    \(8\)   & 0.010884845920 & -12.42 & 21.19 & 0.010884861297 & -12.42 & 22.26 \\
    \(16\)  & 0.010956076712 & -13.78 & 21.18 & 0.010956091321 & -13.78 & 22.23 \\
    \(32\)  & 0.010977019507 & -15.54 & 21.18 & 0.010977035499 & -15.54 & 22.20 \\
    \(64\)  & 0.010982686657 & -17.43 & 21.18 & 0.010982703459 & -17.43 & 22.16 \\
    \(128\) & 0.010984182464 & -19.35 & 21.18 & 0.010984200534 & -19.35 & 22.12 \\
    \(256\) & 0.010984572301 & -21.29 & 21.18 & 0.010984589827 & -21.29 & 22.08 \\
    \(512\) & 0.010984672958 & -23.24 & 21.18 & 0.010984691023 & -23.24 & 22.04 \\
    \hline    
  \end{tabular}
  \caption{Approximations of \(\bar\tau\) for fixed \(\epsilon = 2^{-23}\) and varying \(m\)}
  \label{tab:resultsFixedEpsVaryN}
\end{table}

\section{Conclusion and future work}
We have here presented two a priori adaptive numerical methods for estimating the blow-up time of ODEs. 
For both methods, we prove that under sufficient regularity, 
an $\mathcal{O}(\epsilon)$ approximation error 
is achieved at $\mathcal{O}(\epsilon^{-1})$ computational cost.
These theoretical findings are validated in several numerical experiments.
In addition, we show how the method can be applied to some problems not satisfying Assumption \ref{asmp:b.integrabilityCond}, and discuss how to extend adaptive time stepping from first-order 
to higher-order numerical integrators. 

There are several possible directions for future research. 
One such direction is to try to relax the conditions on \(b\) in both the 1-dimensional and multidimensional case.
For example, one might look into the possibility of remove the monotonicity assumption, allowing for more interesting dynamics, such as ODEs generating oscillating paths that explode in finite time. 
Additionally, adapting our numerical method to a stochastic setting trying to estimate the blow-up time of stochastic differential equation, such as in \cite{Dávila2005}, could also prove fruitful. A potential way forward is to combine our method with existing adaptive method for approximating hitting times of SDEs~\cite{Hoel2025}, relating to the threshold hitting time $\tau_\epsilon$ applied in the current work.  

\section*{Code availability}
The code used to run the numerical experiments provided in this work can be found at: \href{https://github.com/johannesvm/adaptive-methods-blow-up-times-ODEs}{https://github.com/johannesvm/adaptive-methods-blow-up-times-ODEs}

\bibliographystyle{amsplain}
\bibliography{References.bib}

\end{document}